\newtheorem{question}{Question}
\newtheorem{lem}{Lemma}[section]
\newtheorem{thm}[lem]{Theorem}
\newtheorem{theorem}[lem]{Theorem}
\newtheorem{cor}[lem]{Corollary}
\theoremstyle{definition}
\newtheorem{remark}[lem]{Remark}
\DeclareMathAlphabet{\curly}{U}{rsfs}{m}{n}
\newcommand{\ra}{\ensuremath{\rightarrow}}
\newcommand{\tors}{\operatorname{tors}}
\newcommand{\End}{\operatorname{End}}
\newcommand{\Aut}{\operatorname{Aut}}
\newcommand{\ord}{\operatorname{ord}}
\newcommand{\Q}{\mathbb{Q}}
\newcommand{\Z}{\mathbb{Z}}
\newcommand{\CM}{\operatorname{CM}}
\newcommand{\OO}{\mathcal{O}}
\newcommand{\ff}{\mathfrak{f}}
\mathchardef\mhyphen="2D
\title{Torsion for CM Elliptic Curves Defined over Number Fields of Degree $2p$}
\author{Abbey Bourdon}
\author{Holly Paige Chaos}
\begin{document}

\subjclass[2020]{Primary 11G05, 11G15.}

\begin{abstract}
For a prime number $p$, we characterize the groups that may arise as torsion subgroups of an elliptic curve with complex multiplication defined over a number field of degree $2p$. In particular, our work shows that a classification in the strongest sense is tied to determining whether there exist infinitely many Sophie Germain primes.
\end{abstract}

\maketitle

\section{Introduction}
 In 1922, Mordell proved that the set of $\Q$-rational points of an elliptic curve $E$ defined over $\Q$ is a finitely generated abelian group \cite{mordell22}. That is, 
 $E(\mathbb{Q})\cong E(\mathbb{Q})_{\tors}\times \mathbb{Z}^r$, where $E(\mathbb{Q})_{\tors}$ denotes the finite set of torsion points and $r\in \Z^{\geq 0}$ is the rank of $E/\Q$. It is natural to ask what groups arise as $E(\mathbb{Q})_{\tors}$ as $E$ ranges over all elliptic curves over $\Q$, and the answer is known due to work of Mazur.

 \begin{theorem}[Mazur, \cite{mazur77}]
Let $E/\Q$ be an elliptic curve. Then $E(\Q)_{\tors}$ is isomorphic to one of the following groups:
\begin{center}\begin{tabular}{ll}
$\Z/m\Z$ &$1 \leq m \leq 10$ or $m=12$\\
$\Z/2\Z \times \Z/2m\Z$ &$1 \leq m \leq 4.$
\end{tabular}
\end{center}
Furthermore, each of these groups occurs as a torsion subgroup of an elliptic curve $E/\Q.$
 \end{theorem}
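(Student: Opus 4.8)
The plan is to establish the two directions separately, translating everything into the language of modular curves. The basic dictionary is that a $\Q$-rational point of exact order $N$ on an elliptic curve $E$ is the same datum as a non-cuspidal $\Q$-rational point on the modular curve $Y_1(N)$, and that the groups $\Z/2\Z\times\Z/2m\Z$ are parametrized by a curve $X_1(2,2m)$ recording the corresponding level structure. For the existence direction I would simply note that for exactly the values $N\in\{1,\dots,10,12\}$ and $m\in\{1,2,3,4\}$ these modular curves have genus $0$ and carry a rational cusp, hence are isomorphic to $\PP^1_\Q$. Their infinitely many rational points produce infinitely many curves (over the generic point, the universal elliptic curve) realizing each listed group, so this half reduces to exhibiting the genus-$0$ models, or equivalently writing down one numerical example per group.

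For the classification direction I would first fix the possible group structure. Since $E(\Q)_{\tors}$ is finite by Mordell's theorem, it is isomorphic to $\Z/m\Z\times\Z/n\Z$ with $m\mid n$; the Weil pairing embeds $\mu_m$ as a $\Gal(\overline{\Q}/\Q)$-submodule of $E[m]\subseteq E(\Q)$, which forces $\mu_m\subseteq\Q$ and hence $m\in\{1,2\}$. It therefore remains to bound the order of a cyclic torsion point and to exclude the groups $\Z/2\Z\times\Z/2m\Z$ with $m\geq 5$, both of which amount to showing that the relevant curves $X_1(N)$ or $X_1(2,2m)$ have only cusps as rational points. Granting the prime-level statement below, the surviving composite levels form a finite list (the order divides $8\cdot 9\cdot 5\cdot 7$), namely curves such as $X_1(14),X_1(15),X_1(16),X_1(18),\dots$ together with $X_1(2,10)$; here $X_1(14)$ and $X_1(15)$ are elliptic curves of rank $0$ whose rational points are all cusps, while the remaining ones have genus $\geq 2$, and I would determine their finitely many rational points by an explicit Jacobian or Chabauty computation, finding only cusps.

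The crux, and the step I expect to be the genuine obstacle, is proving that for every prime $p\geq 11$ there is no elliptic curve over $\Q$ with a rational point of order $p$, i.e.\ that $X_1(p)(\Q)$ consists of cusps alone. Faltings' theorem gives finiteness for each fixed $p$ but yields neither the uniform bound nor the exact list, so a structural argument on the Jacobian is unavoidable. I would follow Mazur's strategy: factor the natural map through $\Jac(X_0(p))=J_0(p)$, introduce the Hecke action and the Eisenstein ideal $I$ in the Hecke algebra, and show that the associated Eisenstein quotient $\tilde{J}$ of $J_0(p)$ has Mordell--Weil rank $0$ over $\Q$, with the cuspidal class generating the torsion (of order the numerator of $\tfrac{p-1}{12}$). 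The decisive technical input is a formal-immersion argument: reducing a hypothetical rational point of order $p$ modulo a prime $\ell$ of good reduction and comparing it with the reduction of a cusp, one shows that the composite $X_1(p)\to\tilde{J}$ is a formal immersion at the cusp in characteristic $\ell$, so any two rational points with the same reduction must coincide; together with the rank-$0$ statement this forces the only rational points to be the cusps. Finally I would bound the prime-power torsion, ruling out orders $16,25,27,49$ by the same analysis applied to the curves $X_1(p^2)$, and assemble the surviving cases into Mazur's list.
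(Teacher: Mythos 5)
The paper does not prove this theorem: it is quoted verbatim as Mazur's classification and cited to \cite{mazur77}, serving only as motivation for the questions the paper actually addresses. So there is no internal proof to compare against, and your proposal must be judged on its own terms.

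As a roadmap it is accurate and essentially reproduces the standard architecture of Mazur's argument: the dictionary with rational points on $X_1(N)$ and $X_1(2,2m)$, the genus-$0$ realization of the eleven listed groups, the Weil-pairing argument forcing the first invariant $m$ to lie in $\{1,2\}$, the reduction of the composite levels to a finite list of low-genus curves handled by rank-$0$ or Chabauty-type computations, and the identification of the prime levels $p\geq 11$ as the real content, to be treated via the Eisenstein quotient of $J_0(p)$ and a formal-immersion argument at the cusp. But as a proof it is a plan rather than an argument: the two decisive inputs --- that the Eisenstein quotient has Mordell--Weil rank $0$ over $\Q$, and that $X_1(p)\to\tilde{J}$ is a formal immersion at the cusp in residue characteristic $\ell$ --- are named but not established, and each of these occupies a substantial portion of Mazur's memoir. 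Two smaller inaccuracies: the prime-power levels $16$ and $27$ are not of the form $p^2$, so "the same analysis applied to $X_1(p^2)$" should read "to $X_1(16)$, $X_1(25)$, $X_1(27)$, $X_1(49)$" (several of which were in fact settled before Mazur by direct analysis of the relevant genus-$\geq 1$ modular curves); and the formal-immersion step needs care at $\ell=2$, where reduction of torsion is not injective, so one must either work at odd $\ell$ or invoke the refinement handling $\ell=2$. None of this makes the outline wrong, but the proposal should be understood as a correct table of contents for Mazur's proof rather than a self-contained derivation.
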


More generally, if $E$ is an elliptic curve defined over a number field $F$, then the set of $F$-rational points of $E$ is again a finitely generated abelian group by Weil \cite{weil29}, so one may seek to classify the groups occurring as $E(F)_{\tors}$. In fact, by Merel's Uniform Boundedness Theorem \cite{merel}, there are only finitely many groups that arise as $E(F)_{\tors}$, even as $E$ ranges over all elliptic curves defined over all number fields $F$ of a fixed degree. Thus the fundamental question which motivates our work is the following:

\begin{question}
For a fixed $d \in \Z^+$, what groups arise as torsion subgroups of an elliptic curve defined over a number field of degree $d$?
\end{question}

Now 100 years after Mordell's proof, the answer to Question 1 is known only for $d \leq 3$; see \cite{mazur77,kamienny86,KM88,kamienny92,Deg3Class}. A fundamental obstruction to extending the classification to $d>3$ is the existence of so-called \textbf{sporadic} or \textbf{isolated} points on modular curves which can give rise to torsion subgroups occurring on only finitely many elliptic curves (up to isomorphism) defined over all number fields of a fixed degree. To date, we lack adequate tools for detecting such points, and hence the problem of classifying torsion subgroups of elliptic curves over higher degree number fields remains largely open.

One way to obtain classification results beyond cubic fields is to restrict the elliptic curves under consideration. One common family of elliptic curves to study in this context is elliptic curves $E/\mathbb{Q}$ under base extension, where the classification of torsion subgroups is known for degrees $d\leq 5,$ $d=7$, or $d$ not divisible by a prime $\leq 7$; see \cite{najman16,GJNajman,GJ17}. If we require only that the $j$-invariant of $E$ lie in $\Q$, then analogous classification results exist \cite{Guzvic,CremonaNajmanQCurve}. Another common family is elliptic curves with \textbf{complex multiplication (CM)}, which are elliptic curves with unusually large endomorphism rings. Whereas most elliptic curves have endomorphism ring isomorphic to $\Z$, we say $E/F$ is a CM elliptic curve if $\End_{\overline{F}}(E) \cong \OO$, an order in an imaginary quadratic field $K$. Each order is uniquely determined by its discriminant $\Delta\coloneqq [\OO_K:\OO]^2 \cdot \Delta_K$, where  $\Delta_K$ is the discriminant of $K$ and $\OO_K$ is its ring of integers. For the set of all CM elliptic curves, the classification of torsion subgroups is known for any $d\leq 13$ or for any odd $d>13$; see \cite{tor2,Olson74,BP,BCS}. We note that CM elliptic curves produce many examples of sporadic points on modular curves (see, for example, \cite{LeastCMDeg}), so this provides further motivation for studying this class in particular.

 In the present work, we extend the classification of torsion subgroups of CM elliptic curves to those defined over any number field of degree twice a prime, building on work of the first author and Clark \cite{BC1,BC2}. In fact, since the classification is known for $d=4,6,$ and 10 by \cite{tor2}, we need only consider fields of degree $2p$ for primes $p>5$.
 Our classification is most clearly stated in the context of new subgroups. By Theorem 2.1 in \cite{BCS}, if a torsion subgroup arises in degree $d',$ then it arises in any degree $d$ for which $d'\mid d.$ We say a CM torsion subgroup is \textbf{new} if it occurs in degree $d$ and not in any degree $d' < d$ such that $d'\mid d.$ Since torsion subgroups of CM elliptic curves in degrees 1 and 2 are known \cite{Olson74,tor2}, and there are no new CM torsion subgroups in degree $p>5$ for $p$ prime \cite{BCS}, it suffices to classify only the new subgroups arising in degree $2p$. 

\begin{theorem}\label{chaos}
Let $F$ be a number field of degree $2p$ for $p>5$ prime and let $E/F$ be an elliptic curve with CM by the order of discriminant $\Delta$. Then $E(F)_{\tors}$ is new if and only if one of the following occurs: 
\begin{enumerate} 
\item $\Delta=-115$, $p=11$, and $E(F)_{\tors}\cong \Z/23\Z$.
\item $\Delta=-235$, $p=23$, and $E(F)_{\tors}\cong \Z/47\Z$.
\item $\Delta \in\{-11,-19,-27,-43,-67,-163\}$, $2p+1$ is prime with $\left( \frac{\Delta}{2p+1} \right) = 1$, and \\$E(F)_{\tors}\cong \Z/(2p+1)\Z$.
\item $\Delta  \in\{-8,-12,-16,-28\}$, $2p+1$ is prime with $\left( \frac{\Delta}{2p+1} \right) = 1$, and\\ $E(F)_{\tors}\cong \Z/2(2p+1)\Z$.
\item $\Delta=-7$, $2p+1$ is prime with $\left( \frac{\Delta}{2p+1} \right) = 1$, and $E(F)_{\tors}\cong \Z/2\Z \times \Z/2(2p+1)\Z$.
\item $\Delta=-3$, $p=7$, and $E(F)_{\tors}\cong \Z/49\Z$.
\item $\Delta=-3$, $6p+1$ is prime, and $E(F)_{\tors}\cong \Z/(6p+1)\Z$.
\item $\Delta=-4$, $4p+1$ is prime, and $E(F)_{\tors}\cong \Z/2(4p+1)\Z$.

\end{enumerate}
In particular, any new torsion subgroup arises on one of only finitely many CM elliptic curves, and all but $\Delta=-115$ and $-235$ correspond to imaginary quadratic orders of class number 1.
\end{theorem}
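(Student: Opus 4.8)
\emph{Proof proposal.} The plan is to reduce the classification to a short list of arithmetic conditions in the single parameter $p$, by combining the Galois-theoretic description of torsion on CM elliptic curves with sharp degree formulas, and then to solve the resulting constraints. First I would recall from \cite{BC1,BC2} the structure of the mod-$N$ image of Galois for a CM elliptic curve $E/F$ with CM by $\mathcal{O}$: after adjoining $K$, the image lies in the Cartan subgroup $(\mathcal{O}/N\mathcal{O})^\times$ (and in its normalizer when $K \not\subseteq F$), so the field generated by a torsion point, modulo the action of $\mathcal{O}^\times$ via the Weber function, is a ray class field whose degree over $\mathbb{Q}(j(E))$ can be computed explicitly. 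This yields, for a point of exact order $N$, a lower bound on $[F:\mathbb{Q}]$ depending only on $\Delta$, on the factorization type (split, inert, ramified) of each prime $\ell \mid N$ in $\mathcal{O}$, on the class number $h(\mathcal{O}) = [\mathbb{Q}(j(E)):\mathbb{Q}]$, and on the unit index $w/2 = |\mathcal{O}^\times|/2$.

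Next I would exploit that $[F:\mathbb{Q}] = 2p$ has only the divisors $1, 2, p, 2p$. Writing $E(F)_{\tors} \cong \mathbb{Z}/m\mathbb{Z} \times \mathbb{Z}/mn\mathbb{Z}$ and decomposing into $\ell$-primary parts, each prime power $\ell^k$ appearing must contribute a degree factor dividing $2p$. Since a split prime contributes essentially $(\ell-1)\ell^{k-1}/(w/2)$ and an inert prime a factor involving $\ell^2-1$, the divisibility by $2p$ forces a very short list of Diophantine possibilities. With $w=2$ this gives $\ell = 2p+1$; with the extra units of $\Delta=-3$ ($w=6$) it gives $\ell = 6p+1$ and the single prime-power solution $\ell(\ell-1)=6p$, namely $\ell=7$, $p=7$, $N=49$; and with $\Delta=-4$ ($w=4$) it gives $\ell = 4p+1$. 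The conditions $\left( \frac{\Delta}{\ell} \right) = 1$ are exactly the splitting conditions, and the additional factors of $2$ in the cyclic part record the behavior of $2$ for the even discriminants $\Delta \in \{-8,-12,-16,-28\}$ and for $\Delta=-7$. The sporadic ramified cases $\ell \mid \Delta$ with $\ell = 2p+1$ arise for the class-number-$2$ orders $\Delta=-115$ ($p=11$) and $\Delta=-235$ ($p=23$), where the factor $[\mathbb{Q}(j(E)):\mathbb{Q}]=2$ combines with the smaller ramified contribution to land precisely in degree $2p$; these I would treat by direct computation.

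For each surviving candidate I must then confirm that it is genuinely \emph{new}, i.e. does not already occur in degree $1$, $2$, or $p$. Here I invoke the degree-$1$ and degree-$2$ CM classifications \cite{Olson74,tor2} together with the fact \cite{BCS} that there is no new CM torsion subgroup in prime degree $p>5$; the primality of $p$ is what makes this check clean, since it forbids the intermediate degrees that would otherwise permit composite or mixed torsion to descend. Finiteness of the set of curves realizing each group follows because the relevant orders have class number $1$ (or $2$ for $\Delta=-115,-235$), so $j(E)$ is confined to finitely many values and the prescribed torsion pins down $E/F$ up to finitely many twists.

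The main obstacle I anticipate is twofold. First, making the degree formulas \emph{sharp}: one must track the unit factor $w/2$, the distinction between the field of a coordinate and of the full point, and whether $K \subseteq F$, since an error by a single factor of $2$ changes which $\ell$ are admissible and alters the entire statement. Second, and more seriously, proving the list is \emph{exhaustive}: I must rule out every remaining configuration, such as a point of composite order $\ell_1\ell_2$ or a noncyclic group $\mathbb{Z}/m\mathbb{Z}\times\mathbb{Z}/mn\mathbb{Z}$ with $m>1$ beyond the recorded $\Delta=-7$ case, by showing its minimal degree either strictly exceeds $2p$ or divides a proper divisor of $2p$ (so it is not new). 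This exhaustiveness argument, rather than the construction of the examples, is where the bulk of the work will lie.
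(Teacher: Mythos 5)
Your plan is essentially the paper's own argument: reduce to divisibility constraints on $[F:\Q(j(E))] \in \{1,2,p,2p\}$ via the sharp degree formulas $T(\OO,M,N)$ and $T^{\circ}(\OO,M,N)$ of \cite{BC1,BC2}, case-split on $h(\OO)$, $\omega$, and the splitting behavior of the primes dividing $N$ to obtain exactly the Diophantine conditions $\ell = 2p+1, 4p+1, 6p+1$ and the sporadic cases $\Delta = -115, -235$ and $N=49$, then separately pin down the full group structure and verify newness against the degree $1$, $2$, and $p$ classifications. You have also correctly located where the real work lies, namely the $T$ versus $T^{\circ}$ factor-of-two bookkeeping and the exhaustive elimination of composite exponents and of full $2$-torsion outside $\Delta=-7$, which is precisely what occupies Sections 3 and 4 of the paper.
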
 

\begin{remark}
In \cite{GuzvicRoy}, the authors classify torsion subgroups of Mordell curves defined over $\Q$ under base extension to number fields of degree $2p$ and $3p$, where $p \geq 5$ is prime. Every Mordell curve $E$ has $j(E)=0$ and CM by the order of discriminant $\Delta=-3$. Our classification result includes additional groups since we are not requiring elliptic curves with $j(E)=0$ to be defined over $\Q$.
\end{remark}

Theorem \ref{chaos} tells us that if $\Delta \neq -3, -4$, then the only $\Delta$-CM torsion subgroups that can arise in degree $2p$ for $p>5$ that did not occur over a number field of degree 2 or degree $p$ must have exponent $2p+1$ or $2(2p+1)$, where $p$ is a Sophie Germain prime. It is conjectured that there are infinitely many Sophie Germain primes, though this remains unproven. These primes were a vital piece of Sophie Germain's investigations concerning Fermat's Last Theorem.

 From Theorem \ref{chaos}, we can quickly deduce the torsion subgroups that arise for CM elliptic curves defined over number fields of degree $2p$ where $p>5$ is prime, including for the first previously unknown degree $d=14$. For example, 7 is not a Sophie Germain prime, but $6\cdot7+1$ and $4\cdot7+1$ are both prime. Thus, by Theorem \ref{chaos}, the new torsion subgroups in degree 14 are precisely $\Z/43\Z,$ $\Z/49\Z,$ and $\Z/58\Z$. We record this and other small degrees in the following result.
  
 \begin{cor}\label{fourteen} Let $F$ be a number field of degree $2p$ for $p \in \{7,11,13,17,19\}$, and let $E/F$ be a CM elliptic curve. The group $E(F)_{\tors}$ is isomorphic to one of the following groups which arises over quadratic fields
\begin{center}\begin{tabular}{ll}
$\Z/m\Z$ &for $m=1,2,3,4,6,7,$ or $10$\\
$\Z/2\Z \times \Z/2m\Z$ & for $m=1,2$, or $3$ \\
$\Z/3\Z \times \Z/3\Z$
\end{tabular}
\end{center}
or else
\begin{enumerate}
\item $p=7$ and $E(F)_{\tors} \cong \Z/m\Z$ for $m=43,49,$ or $58$,
\item $p=11$ and $E(F)_{\tors} \cong \Z/m\Z$ for $m=23,46,67$ or $\Z/2\Z \times \Z/46\Z$,
\item $p=13$ and $E(F)_{\tors} \cong \Z/m\Z$ for $m=79$ or $106$, or
\item $p=17$ and $E(F)_{\tors} \cong \Z/m\Z$ for $m=103$.
\end{enumerate}
Moreover, each group occurs.
\end{cor}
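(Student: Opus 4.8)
The plan is to combine Theorem~\ref{chaos} with the known quadratic classification and the structural facts quoted in the introduction. The guiding observation is that the groups arising as $E(F)_{\tors}$ over degree-$2p$ fields partition into those that are \textbf{old} (already arising over a field whose degree properly divides $2p$) and those that are \textbf{new}. The proper divisors of $2p$ are $1$, $2$, and $p$. Since there are no new CM torsion subgroups in degree $p$ for $p>5$ \cite{BCS}, every subgroup arising in degree $p$ already arises in degree $1$; and every degree-$1$ subgroup arises in degree $2$ since $1\mid 2$, by Theorem 2.1 of \cite{BCS}. Hence the old subgroups in degree $2p$ are exactly those arising over quadratic fields, which are classified in \cite{tor2} and give precisely the groups in the first displayed table (a list independent of $p$). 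So the first step is to record this list and note that all of these groups do arise in degree $2p$ because $2\mid 2p$.

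The second step is to read off the new subgroups for each $p\in\{7,11,13,17,19\}$ by specializing Theorem~\ref{chaos}; this is a finite, explicit bookkeeping. Cases (1), (2), and (6) force $p\in\{11,23,7\}$ and so contribute only for those exact values, while cases (3)--(5), (7), (8) are governed by primality of $2p+1$, $6p+1$, or $4p+1$ together with a splitting condition $\left(\tfrac{\Delta}{q}\right)=1$. For example, when $p=7$ one checks $2\cdot7+1=15$ is not prime (eliminating (3)--(5)), whereas $6\cdot7+1=43$ and $4\cdot7+1=29$ are prime, giving $\Z/43\Z$ and $\Z/58\Z$ from (7)--(8) and $\Z/49\Z$ from (6); when $p=19$ none of $39$, $115$, $77$ is prime and $p\notin\{7,11,23\}$, so there are no new subgroups at all, which explains the absence of $p=19$ from the enumerated list. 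I would carry out the same primality checks for $p=11,13,17$.

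The one genuinely non-formal point — the main obstacle — lies in cases (3)--(5), which survive only for $p=11$, where $2p+1=23$ is prime. Here Theorem~\ref{chaos} yields a new group only if some admissible discriminant $\Delta$ actually satisfies $\left(\tfrac{\Delta}{23}\right)=1$, so I must \emph{verify the existence} of such a $\Delta$ rather than merely invoke the hypothesis. Concretely, I would evaluate the relevant Kronecker symbols modulo $23$: one finds $\left(\tfrac{-28}{23}\right)=1$, so case (4) contributes $\Z/46\Z$, and $\left(\tfrac{-7}{23}\right)=1$, so case (5) contributes $\Z/2\Z\times\Z/46\Z$; meanwhile case (1) gives $\Z/23\Z$ and case (7) (with $6\cdot11+1=67$ prime) gives $\Z/67\Z$. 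This produces exactly the groups listed for $p=11$, and analogous (but unconditional) computations settle $p=13$ ($\Z/79\Z$, $\Z/106\Z$) and $p=17$ ($\Z/103\Z$).

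Finally, the ``each group occurs'' clause follows by assembling existence from both halves: the old groups are realized over quadratic fields by \cite{tor2}, and each new group is realized by the explicit $\Delta$-CM curve constructed in the proof of Theorem~\ref{chaos}, base-changed to a degree-$2p$ field on which the asserted torsion is defined. Taking the union of the degree-$2$ list with the per-$p$ new groups computed above then yields exactly the enumeration in the statement.
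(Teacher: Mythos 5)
Your proposal is correct and follows essentially the same route the paper takes: partition into old subgroups (which, since the proper divisors of $2p$ are $1,2,p$ and degrees $1$ and $p$ contribute nothing beyond degree $2$, are exactly the quadratic-field list of \cite{tor2}) and new subgroups read off from Theorem \ref{chaos} by the finite primality and Kronecker-symbol checks, which you carry out correctly (including verifying that suitable $\Delta$ exist for $p=11$ and that $p=19$ yields nothing new). The paper offers only the $p=7$ computation as an illustration and leaves the rest implicit, so your write-up is, if anything, more complete.
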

\begin{remark}
Since there are only finitely many CM $j$-invariants contained in all number fields of a fixed degree (see $\S 2$), each of these groups necessarily arises on only finitely many CM elliptic curves.
\end{remark}


In particular, by Corollary \ref{fourteen}, we see that \emph{no} new torsion subgroups arise on CM elliptic curves defined over number fields of degree $d=2\cdot 19$. Thus, another consequence of Theorem \ref{chaos} is a description of degrees of the form $2p$ such that no new torsion subgroups occur.

     \begin{cor}\label{chaos22} Let $F$ be a number field of degree $2p$, for $p>5$, and suppose none of the following hold:
     \begin{enumerate}
     \item $2p+1$ is prime and split in an imaginary quadratic order of class number 1 with $\Delta<-4$.
     \item $4p+1$ is prime.
     \item $6p+1$ is prime.
     \end{enumerate} Then for any CM elliptic curve $E/F$, the torsion subgroup $E(F)_{\tors}$ is isomorphic to one of the groups that arise for CM elliptic curves defined over quadratic fields. 
\end{cor}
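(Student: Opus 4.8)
The plan is to deduce this directly from Theorem~\ref{chaos}. Since the torsion subgroups of CM elliptic curves in degrees $1$ and $2$ are known and there are no new CM torsion subgroups in prime degree $p>5$ \cite{Olson74,tor2,BCS}, it suffices to show that when none of the three numerical conditions hold, $E(F)_{\tors}$ cannot be \emph{new}; the conclusion then follows by descending to a smaller divisor degree. Thus I would assume that conditions (1)--(3) all fail and argue that none of the eight cases of Theorem~\ref{chaos} can occur.

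First I would match the cases of Theorem~\ref{chaos} against the three hypotheses. The key observation is that the discriminants appearing in cases (3), (4), and (5), namely $\{-7,-8,-11,-12,-16,-19,-27,-28,-43,-67,-163\}$, are \emph{exactly} the discriminants of imaginary quadratic orders of class number $1$ with $\Delta<-4$. Each of these cases requires $2p+1$ to be prime with $\left(\frac{\Delta}{2p+1}\right)=1$, i.e. $2p+1$ prime and split in the corresponding order, which is precisely condition (1) of the corollary. Hence if condition (1) fails, cases (3)--(5) are excluded. Likewise, case (7) forces $6p+1$ prime (condition (3)) and case (8) forces $4p+1$ prime (condition (2)), so these too are ruled out.

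It then remains to dispose of cases (1), (2), and (6), in which $p$ is pinned to a single value. This is the part of the argument requiring a genuine (if short) verification, and is the main obstacle: one must confirm that each pinned value of $p$ is caught by one of the excluded conditions. Here I would simply check that for $p=11$ (case (1)) and $p=23$ (case (2)) the values $6p+1=67$ and $6p+1=139$ are prime, so condition (3) holds, and that for $p=7$ (case (6)) the value $4p+1=29$ is prime, so condition (2) holds. In each instance a condition we have assumed to fail is actually forced, so these values of $p$ are incompatible with the hypotheses of the corollary. Consequently none of the eight cases of Theorem~\ref{chaos} can hold, and $E(F)_{\tors}$ is not new.

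Finally I would conclude by reducing to smaller degree. Since $E(F)_{\tors}$ is not new, it arises over a number field of some degree $d'$ with $d'\mid 2p$ and $d'<2p$; as $p>5$ is prime, the proper divisors are $d'\in\{1,2,p\}$. If $d'\in\{1,2\}$, the group arises over a quadratic field (using $1\mid 2$). If $d'=p$, then by \cite{BCS} there are no new CM torsion subgroups in degree $p$, so the group already occurs in degree $1$ and hence over a quadratic field. In every case $E(F)_{\tors}$ is isomorphic to a group arising over a quadratic field, as claimed. Aside from the bookkeeping in the third paragraph --- verifying that the discriminant list for cases (3)--(5) is exactly the class number $1$ list with $\Delta<-4$, and that the three pinned primes $p\in\{7,11,23\}$ are each caught by a numerical condition --- the entire argument is immediate from Theorem~\ref{chaos} and the known classifications in degrees $1$, $2$, and $p$.
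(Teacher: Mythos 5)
Your proof is correct and is exactly the deduction the paper intends: the paper offers no separate proof of Corollary \ref{chaos22}, presenting it as an immediate consequence of Theorem \ref{chaos}, and your write-up supplies precisely that matching of the eight cases against the three numerical conditions (including the correct checks that $67$, $139$, and $29$ are prime for the pinned values $p=7,11,23$) together with the standard descent through degrees $1$, $2$, and $p$.
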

This finding is significant in the context of ``stratification of torsion," a phenomenon first explored in \cite{BCP,BP} for CM torsion subgroups in odd degree. For any positive integer $d$, let $\mathscr{G}_{\CM}(d)$ denote the set of isomorphism classes of groups which arise as $E(F)_{\tors}$ for some CM elliptic curve $E$ over some degree $d$ number field $F$. For any positive integer $d$, we define the set of $d$-Olson degrees to be those positive integers $d'$ for which $\mathscr{G}_{\CM}(d')=\mathscr{G}_{\CM}(d)$. In the case of odd $d$, we find that the set of $d$-Olson degrees possesses a positive asymptotic density \cite{BP}, but whether the same holds true for any even $d$ is still an open problem. See \cite[Questions 1.6]{BP}. 

\begin{remark}
In fact, as noted by Clark, Corollary \ref{chaos22} implies there exist infinitely many 2-Olson degrees. Recall the Prime Number Theorem states that the number of primes $p \leq X$ is asymptotic to $\frac{X}{\log X}$. On the other hand,
for any even $a \in \Z^+$, as $X \ra \infty$ the number of primes $p \leq X$ such that $ap+1$ is also prime is $O(\frac{X}{\log^2 X})$; see \cite[Thm. 3.12]{SieveMethods}.  
By applying this with $a = 2, 4$ and $6$, we see that there are infinitely many primes $p \leq X$ such that $2p$ is a $2$-Olson degree. 
\end{remark}

\section*{Acknowledgements}

We thank Frank Moore, Jeremy Rouse, and the anonymous referees for helpful comments on an earlier draft, and we thank Pete L. Clark for Remark 1.7 and other helpful comments. The first author was partially supported by an A.J. Sterge Faculty Fellowship and NSF grant DMS-2137659.

\section{Background and Notation}

For most elliptic curves $E$ over a number field $F$, the ring of endomorphisms of $E$ defined over $\overline{F}$ is isomorphic to $\Z$, where $n \in \Z$ corresponds to the multiplication-by-$n$ map on $E$. We say an elliptic curve has \textbf{complex multiplication}, or CM, if its endomorphism ring is strictly larger than $\mathbb{Z}.$ For a CM elliptic curve $E/F$, there is an imaginary quadratic field $K$ and positive integer $f$ such that End$_{\bar{F}}(E) \cong \mathcal{O}=\mathbb{Z}+f\mathcal{O}_K,$ the order in $K$ of \textbf{conductor} $f$. Here $\OO_K$ denotes the full ring of integers in $K$. In particular, we note that $\OO \subseteq \OO_K$ and $[\OO_K:\OO]=f$. The order is largest when $f=1$, and so we call $\OO_K$ the \textbf{maximal order}.
Any order $\OO$ in an imaginary quadratic field $K$ can be uniquely identified using its \textbf{discriminant}, \[\Delta=\Delta(\OO)=f^2\cdot \Delta_K,\] where $\Delta_K$ is the discriminant of $K$. We let $\omega$ denote the number of units in $\OO$, so
\[ \omega = \begin{cases} 6 & \text{if }\Delta = -3, \\
4 & \text{if }\Delta = -4, \\ 2 & \text{if } \Delta < -4.
\end{cases} \]
For an elliptic curve $E$ with CM by the order of discriminant $\Delta$, we have $\Delta=-3$ if and only if $j(E)=0$ and $\Delta=-4$ if and only if $j(E)=1728$. We use $w_K$ to denote $\#\OO_K^{\times}$.

CM elliptic curves have a well-known and beautiful connection with class field theory. For example, if $E$ has CM by the maximal order in $K$, then $K(j(E),\mathfrak{h}(E_{\tors}))$ is the maximal abelian extension of $K$, where $\mathfrak{h}: E \rightarrow E/\Aut(E)\cong \mathbb{P}^1$ denotes a Weber function on $E$. If one adjoins the values of a Weber function only on points of order dividing $N$, we obtain the ray class field of $K$ modulo $N$; see, for example Theorem II.5.6 and Corollary II.5.7 of \cite{advanced}. Of particular relevance to the present work is the fact that if $E$ has CM by the order in $K$ of conductor $f$, then $K(j(E))$ is the ring class field of $K$ of conductor $f$ and $[K(j(E)):K]=[\mathbb{Q}(j(E)):\mathbb{Q}]=h(\OO)$, the class number of $\OO$.
For an elliptic curve $E$ with CM by the order of discriminant $f^2\Delta_K$ with $f \geq 2$, we have \cite[Cor. 7.24]{cox}
\begin{equation}
\label{RINGCLASSDEGEQ}
[K(j(E)):K] =h_K \frac{2}{w_K} f \prod_{p \mid f} \left( 1- \left(\frac{\Delta_K}{p}\right) \frac{1}{p} \right),
\end{equation}
where $h_K$ denotes the class number of $K$ and $\left(\frac{\Delta_K}{p}\right)$ is the Kronecker symbol. As there are only finitely many imaginary quadratic fields of a given class number \cite[Theorem III]{heilbronn}, there are only finitely many imaginary quadratic orders of a given class number by (1). For each imaginary quadratic order $\OO$, there are precisely $h(\OO)$ non-isomorphic $\OO$-CM elliptic curves. 

A crucial ingredient in the proof of our main result is the following theorem. Recall $\omega=\#\mathcal{O}^{\times}$.
  \begin{theorem}[Bourdon, Clark, { \cite[Theorem 4.1]{BC2}}] \label{bigmnthm}  Let $K$ be an imaginary quadratic field, and let $\OO$ be the order in $K$ of conductor $f$.
Let $M = \ell_1^{a_1} \cdots \ell_r^{a_r} \mid N = \ell_1^{b_1} \cdots \ell_r^{b_r}$ where $\ell_1< \dots <\ell_r$ are prime numbers and $a_i$, $b_i$ are nonnegative integers. 
\begin{enumerate}
\item There is $T(\OO,M,N) \in \Z^+$ such that: for all $d \in \Z^+$, there is a number field $F \supset K(j(E))$ 
such that
$[F:K(j(E))] = d$ and an $\OO$-CM elliptic curve $E/F$ such that $\Z/M\Z \times \Z/N \Z \hookrightarrow E(F)$ if and only if $T(\OO,M,N) \mid d$.  
\item 
If $N=2$ or 3, then $T(\OO,M,N)$ is as follows:
\begin{align*}
T(\OO, 1,2)&=\begin{cases} 3& \left( \frac{\Delta}{2} \right)=-1 \text{ and } \Delta \neq -3 \\ 1 & \text{otherwise} \end{cases},\\
T(\OO,1,3) &= \begin{cases} 8/\omega& \left( \frac{\Delta}{3} \right)=-1  \\ 1 & \text{otherwise} \end{cases},\\
 T(\OO,2,2) &= \frac{2(2-\left( \frac{\Delta}{2} \right))}{\omega},\\
 T(\OO,3,3) &= \frac{2(3-\left( \frac{\Delta}{3} \right))}{\omega}.
  \end{align*}
  
  \item Suppose $N \geq 4$.  Then we have
\[ T(\OO,M,N) = \frac{\prod_{i=1}^r \widetilde{T}(\OO,\ell_i^{a_i},\ell_i^{b_i})}{\omega} \]
where the definition of $\widetilde{T}(\OO,\ell^a,\ell^b)$ appears below. Put $c := \ord_{\ell}(f)$. 

\begin{enumerate}
\item[i)] If $\left( \frac{\Delta}{\ell} \right) = -1$, then
\[ \widetilde{T}(\OO,\ell^a,\ell^b) \coloneqq \ell^{2b-2}(\ell^2-1). \]
\item[ii)] If $\left( \frac{\Delta}{\ell} \right) = 1$, then
\[ \widetilde{T}(\OO,\ell^a,\ell^b) \coloneqq \begin{cases} \ell^{b-1}(\ell-1) & a = 0 \\ \ell^{a+b-2} (\ell-1)^2 & a \geq 1 \end{cases}. \]
\item[iii)] If $\ell \mid \ff $ and $\left( \frac{\Delta_K}{\ell} \right) = 1$, then
\[ \widetilde{T}(\OO,\ell^a,\ell^b) \coloneqq \ell^{a+b-1}(\ell-1). \]
\item[iv)] If $\left( \frac{\Delta_K}{\ell} \right) = 0$, then
\[ \widetilde{T}(\OO,\ell^a,\ell^b) \coloneqq\begin{cases} \ell^{a+b-1}(\ell-1) & b \leq 2c+1 \\
\ell^{\max(a+b-1,2b-2c-2)}(\ell-1) & b > 2c+1 \end{cases}. \]
\item[v)] If $\ell \mid f$ and $\left( \frac{\Delta_K}{\ell} \right) = -1$, then
\[ \widetilde{T}(\OO,\ell^a,\ell^b) \coloneqq \begin{cases} \ell^{a+b-1}(\ell-1) & b \leq 2c \\
\ell^{\max(a+b-1,2b-2c-1)}(\ell-1) & b > 2c \end{cases}. \]

\end{enumerate}
\end{enumerate}
\end{theorem}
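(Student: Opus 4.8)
The plan is to fix an $\OO$-CM elliptic curve $E_0$ over the ring class field $H \coloneqq K(j(E))$ and to convert the existence statement in part (1) into an orbit computation inside a Cartan subgroup, measuring everything relative to $H$ (so that the cost of $j(E)$ is already paid). Over $H$ the full endomorphism ring is rational, so the Galois action on $E_0[N]$ commutes with $\OO$, and after fixing an $\OO/N\OO$-module identification of $E_0[N]$ the image of $\Gal(\overline{H}/H)$ lies in the Cartan subgroup $C_N \coloneqq (\OO/N\OO)^\times$ (for $\gcd(N,f)=1$; at primes $\ell \mid f$ one instead works with the non-maximal local order $\Z_\ell + \ell^c\OO_{K,\ell}$, $c = \ord_\ell(f)$, and its action on $E_0[\ell^b]$). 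Every $\OO$-CM curve over an extension of $H$ is a twist of $E_0$, and twisting alters the mod-$N$ image by a character valued in $\Aut(E_0) = \OO^\times$. With this dictionary, realizing $\Z/M\Z \times \Z/N\Z \hookrightarrow E(F)$ over some $F \supset H$ amounts to fixing a twist and finding the subgroup of its Galois image that fixes a chosen $\Z/M\Z \times \Z/N\Z$ configuration pointwise: the minimal such $[F:H]$ equals the index of that pointwise stabilizer, i.e.\ the orbit size of the configuration.

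For part (1) I would first settle the structural claim and defer the value of $T$. The forward implication is immediate from base change: if some $L/H$ of degree $T$ realizes the injection, then for any multiple $d = kT$ a degree-$k$ extension $F/L$ still carries the torsion, so $d$ is achievable whenever $T \mid d$. The converse—that every achievable degree is a multiple of $T$, not merely bounded below by it—is the delicate half: for a fixed twist the field generated over $H$ by the required torsion has degree equal to the corresponding orbit size, and one must show that as the twist ranges over the family parametrized by characters into $\OO^\times$, the set of achievable orbit sizes is exactly the set of positive multiples of a single integer $T = T(\OO,M,N)$. Establishing this divisibility (rather than mere minimality) is what makes part (1) a clean statement, and I expect it to rest on how orbit sizes transform under twisting.

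The computational core, parts (4) and (5), is an orbit–stabilizer calculation done one rational prime at a time and then reassembled by the Chinese Remainder Theorem. For $N = \ell^b$, $M = \ell^a$ I would compute $\#(\OO/\ell^b\OO)^\times$ and the index of the pointwise stabilizer of a $\Z/\ell^a\Z \times \Z/\ell^b\Z$ subgroup, splitting according to how $\ell$ meets $\OO$: inert ($\left(\frac{\Delta}{\ell}\right)=-1$, nonsplit Cartan, $E_0[\ell^b]$ free of rank one over $\OO/\ell^b\OO$), split ($\left(\frac{\Delta}{\ell}\right)=1$, $C_{\ell^b}\cong(\Z/\ell^b\Z)^\times\times(\Z/\ell^b\Z)^\times$ via CRT), and the conductor-divisible cases $\ell \mid f$ sorted by $\left(\frac{\Delta_K}{\ell}\right)\in\{1,0,-1\}$. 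These orbit sizes are the quantities $\widetilde{T}(\OO,\ell^a,\ell^b)$ computed over the full Cartan. For general $N \geq 4$ the global orbit size is the product $\prod_i \widetilde{T}(\OO,\ell_i^{a_i},\ell_i^{b_i})$, since $C_N \cong \prod_i C_{\ell_i^{b_i}}$ splits by CRT and the prime-power division fields are mutually linearly disjoint over $H$; dividing by $\omega$ then reflects that the global unit group $\OO^\times$ embeds diagonally into $\prod_i C_{\ell_i^{b_i}}$ and acts through one twisting character, so the optimal twist enlarges the effective stabilizer by the scalars and saves a single factor of $\omega = \#\OO^\times$—globally, not once per prime. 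This yields $T = \left(\prod_i \widetilde{T}\right)/\omega$.

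The main obstacle is twofold. The technical crux lies in the conductor-divisible cases (iii)–(v): when $\ell \mid f$ the ring $\OO/\ell^b\OO$ and the module $E_0[\ell^b]$ are genuinely more intricate, controlled by $c = \ord_\ell(f)$, and the stabilizer index depends on whether $b$ crosses thresholds such as $2c$ or $2c+1$—exactly what produces the $\max(\cdot)$ terms—so tracking the precise module structure here is where most of the work goes. The second subtlety is the passage to small $N$: the uniform formula $T = \prod\widetilde{T}/\omega$ is asserted only for $N \ge 4$ precisely because the diagonal $\OO^\times$ acquires fixed points on low-order torsion (most visibly $-1$ acting trivially on $E_0[2]$), so for $N = 2, 3$ the clean division by $\omega$ overcounts and the values in parts (2)–(3) must be obtained by direct computation. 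Underlying both is the point that must be verified to glue the argument together: that the $\omega$-saving predicted by the scalars can actually be realized by a single global twist simultaneously at all primes, so that $T$ is genuinely attained and is not merely a lower bound on the degree.
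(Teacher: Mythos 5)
A point of comparison first: the paper does not prove Theorem \ref{bigmnthm} at all --- it is imported verbatim from \cite{BC2} as a black box, so there is no in-paper proof to measure your sketch against. Your outline does track the actual architecture of the Bourdon--Clark argument: reduce to the ring class field $K(j(E))$, where the mod-$N$ Galois image lies in the Cartan $(\OO/N\OO)^{\times}$; account for all $\OO$-CM curves over extensions of $K(j(E))$ via twists, which modify the image by characters valued in $\OO^{\times}$; compute stabilizer indices one prime at a time, with the conductor-divisible cases governed by $c=\ord_{\ell}(f)$; and recover $T$ as $\prod_i \widetilde{T}(\OO,\ell_i^{a_i},\ell_i^{b_i})/\omega$ with a single global saving of $\omega$ coming from the twisting character.

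As a proof, however, the load-bearing steps are named rather than carried out. The central gap is part (1): you must show that the set of degrees $[F:K(j(E))]$ in which the torsion injection is achievable is \emph{exactly} the set of positive multiples of one integer. Your base-change step shows this set is closed under multiplication by arbitrary $k$, hence contains all multiples of its minimum, but not that it contains nothing else --- a priori different twists could realize the torsion in incomparable degrees, and ``I expect it to rest on how orbit sizes transform under twisting'' states the problem rather than solving it. In \cite{BC2} this is resolved by pinning down the Galois image precisely (not merely up to containment in the Cartan), using the description of $\Gal$ of the $N$-ray class field over the ring class field as a quotient of $(\OO/N\OO)^{\times}$ by the image of $\OO^{\times}$; the same precise determination is what justifies your unproved assertions that the prime-power torsion fields are linearly disjoint over $K(j(E))$ and that the $\omega$-saving can be realized by one global twist simultaneously at all primes (without which $T$ is only a lower bound). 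Finally, the conductor-divisible cases (iii)--(v), where the local torsion module is no longer free over the local order and the answer changes as $b$ crosses $2c$ or $2c+1$, are exactly where the computation lives, and you defer them entirely. Right skeleton, but the theorem is not reproved here; if the goal is to use it, citing \cite{BC2} as the paper does is the correct move.
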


From this, we deduce the following corollary, which also appears as Theorem 6.2 in \cite{BC1}. It refines earlier results of Silverberg \cite{silverberg88,silverberg92}.

\begin{cor}
\label{dividethm}
Let $\OO$ be an order in an imaginary quadratic field $K$, and let $N \in \Z^+$. Then \[\varphi(N) \mid \omega \cdot T(\OO,1,N).\] \end{cor}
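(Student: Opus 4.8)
The plan is to reduce the claim to the case of prime powers by exploiting multiplicativity. Write $N = \ell_1^{b_1}\cdots \ell_r^{b_r}$; since $M = 1$ here, every $a_i = 0$. Because $\varphi$ is multiplicative we have $\varphi(N) = \prod_{i=1}^r \varphi(\ell_i^{b_i})$, and for $N \geq 4$ part (6) of Theorem \ref{bigmnthm} gives the identity $\omega \cdot T(\OO,1,N) = \prod_{i=1}^r \widetilde{T}(\OO,1,\ell_i^{b_i})$. As divisibility is preserved under taking products, it therefore suffices to establish the single prime-power statement
\[ \ell^{b-1}(\ell-1) = \varphi(\ell^b) \mid \widetilde{T}(\OO,1,\ell^b) \quad\text{for all primes } \ell \text{ and } b\ge 1, \]
after which the case $N \geq 4$ follows at once by multiplying these relations over the prime powers exactly dividing $N$. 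I note that the factor $\omega$ is handled painlessly this way: it is absorbed once into the product identity of part (6), so it never needs to be tracked prime-by-prime.

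For the prime-power statement I would simply read off the formulas in parts (3) and (4) of Theorem \ref{bigmnthm} after substituting $a = 0$. When $\ell^b = 2$, part (3) gives $\widetilde{T}(\OO,1,2) \in \{1,3\}$, and $\varphi(2) = 1$ divides both. When $\ell^b \geq 3$, exactly one of the five cases (i)--(v) of part (4) applies, and in each the power of $\ell$ is at least $b-1$ while a factor of $(\ell-1)$ is always present. Concretely, cases (ii), (iii), and the first branches of (iv) and (v) yield precisely $\ell^{b-1}(\ell-1) = \varphi(\ell^b)$; case (i) yields $\ell^{2b-2}(\ell-1)(\ell+1)$, which is divisible by $\ell^{b-1}(\ell-1)$ since $b \geq 1$; and the second branches of (iv) and (v) carry an exponent $\max(b-1,\,\cdot\,) \geq b-1$ on $\ell$. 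In every case $\varphi(\ell^b) \mid \widetilde{T}(\OO,1,\ell^b)$, as needed.

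It remains to dispose of the small moduli $N \in \{1,2,3\}$, for which the product formula of part (6) does not apply; here I would invoke part (2) directly. For $N = 1$ and $N = 2$ one has $\varphi(N) = 1$, which divides anything. For $N = 3$ one has $\varphi(3) = 2$, while $\omega \cdot T(\OO,1,3)$ equals $8$ when $\left(\frac{\Delta}{3}\right) = -1$ and equals $\omega$ otherwise; since $8$ is even and $\omega \in \{2,4,6\}$ is always even, $2$ divides $\omega \cdot T(\OO,1,3)$ in either case.

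I do not expect a genuine obstacle: once the reduction to prime powers is in place the argument is a finite case check. The only points that require care are bookkeeping ones --- confirming that the five cases (i)--(v) of part (4), together with part (3) for $\ell^b = 2$, exhaust all possible behaviors of a prime $\ell$ relative to the order $\OO$, and remembering that the product formula of part (6) is valid only for $N \geq 4$, so that the moduli $N \leq 3$ genuinely must be settled separately by hand.
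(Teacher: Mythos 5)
Your proof is correct and is essentially the deduction the paper intends: the paper states the corollary as an immediate consequence of Theorem \ref{bigmnthm} (citing it as Theorem 6.2 of \cite{BC1}) without writing out details, and your reduction via the product formula to the prime-power divisibility $\varphi(\ell^b) \mid \widetilde{T}(\OO,1,\ell^b)$, plus the separate check for $N \leq 3$, is exactly the right way to fill them in. The only slip is bookkeeping: the product formula you call ``part (6)'' is part (5) of Theorem \ref{bigmnthm} as stated in this paper.
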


Suppose $E/F$ is an $\OO$-CM elliptic curve with $\Z/M\Z \times \Z/N\Z \hookrightarrow E(F)$. Since $[K(j(E)):K]=[\mathbb{Q}(j(E)):\mathbb{Q}]=h(\OO)$, we can actually consider the divisibility conditions in Theorem \ref{bigmnthm} over  $\Q(j(E))$, as illustrated in the field diagram below. 
\begin{center}
\begin{tikzpicture}[node distance=1.8cm]
\node (Q)                  {$\Q$};
\node (Qj) [above of=Q, node distance=1.9cm] {$\Q(j(E))$};
\node (K) [above right of =Q, node distance=2.2 cm] {$K$};
\node (QKj)  [above of=K, node distance=1.9 cm]   {$K(j(E))$};
\node (F) [above of=Qj, node distance=1.9cm] {$F$};
\node (FK)  [above of=QKj, node distance=1.9 cm]   {$FK$};

 \draw[-] (Q) edge node[auto]{$h(\OO)$} (Qj);
 \draw[-] (Q) edge node[auto] {$2$} (K);
 \draw[-] (K) edge node[right] {$h(\OO)$} (QKj);
 \draw[-] (Qj) edge node[auto] {$2$} (QKj);
 \draw[-] (Qj) edge node[auto] {divisible by $T(\OO,M,N)$} (F);
 \draw[-] (QKj) edge node[right] {divisible by $T(\OO,M,N)$} (FK);
  \draw[-] (F) edge node[auto] {$1$ or $2$} (FK);

\end{tikzpicture}
\end{center}

\begin{cor}
\label{dividethmGen}
Let $\OO$ be an order in an imaginary quadratic field $K$, and let $E/F$ be an $\OO$-CM elliptic curve with an $F$-rational point of order $N \in \Z^+$. Then $T(\OO, 1,N) \mid [F:\Q(j(E))]$ and \[\varphi(N) \mid \omega \cdot [F:\Q(j(E))].\] \end{cor}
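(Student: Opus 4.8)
The plan is to deduce both divisibilities from Theorem~\ref{bigmnthm} and Corollary~\ref{dividethm} by passing from $F$ to the compositum $FK$, where Theorem~\ref{bigmnthm} applies directly. Since $E/F$ has a point of order $N$, we have $\Z/1\Z \times \Z/N\Z \hookrightarrow E(F)$, i.e.\ the case $M = 1$. First I would base change to $FK := F \cdot K$: because $\overline{FK} = \overline{F}$, the curve $E/FK$ still has CM by $\OO$, and the point of order $N$ remains rational, so $\Z/1\Z \times \Z/N\Z \hookrightarrow E(FK)$. Moreover $K \subseteq FK$ and $j(E) \in F \subseteq FK$, so $K(j(E)) \subseteq FK$, which places us in exactly the setting of Theorem~\ref{bigmnthm}(1), now over the base field $K(j(E))$.

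The crucial observation is that $FK$ itself \emph{witnesses} the existence half of the equivalence in Theorem~\ref{bigmnthm}(1) for the value $d := [FK:K(j(E))]$: there exists a number field $F' \supseteq K(j(E))$ with $[F':K(j(E))] = d$ (namely $F' = FK$) carrying an $\OO$-CM elliptic curve with a point of order $N$. Hence the ``only if'' direction of the equivalence forces $T(\OO,1,N) \mid [FK:K(j(E))]$. I then descend from $FK$ back to $F$ using the field diagram. Writing $D := [F:\Q(j(E))]$ and comparing the two ways of computing $[FK:\Q(j(E))]$ through the tower gives $2\,[FK:K(j(E))] = [FK:F]\cdot D$, where $[K(j(E)):\Q(j(E))] = 2$ and $[FK:F] \in \{1,2\}$ according to whether $K \subseteq F$. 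In either case one reads off $[FK:K(j(E))] \mid D$ (the two quantities are equal when $[FK:F]=2$ and differ by a factor of $2$ when $[FK:F]=1$), so $T(\OO,1,N) \mid [FK:K(j(E))] \mid [F:\Q(j(E))]$, which is the first claim.

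For the second divisibility I would simply combine this with Corollary~\ref{dividethm}, which gives $\varphi(N) \mid \omega \cdot T(\OO,1,N)$; since $T(\OO,1,N) \mid [F:\Q(j(E))]$, multiplying the divisibility $T(\OO,1,N)\mid [F:\Q(j(E))]$ through by $\omega$ yields $\varphi(N) \mid \omega\, T(\OO,1,N) \mid \omega\,[F:\Q(j(E))]$. The only genuinely delicate point is the correct reading of Theorem~\ref{bigmnthm}(1): it is phrased as an existence statement, and the main step is recognizing that our given curve over $FK$ supplies the witness needed to invoke its ``only if'' direction as a necessary condition for the specific field extension at hand. The remaining work---tracking CM under base change and the casework on $[FK:F]$---is routine bookkeeping with the field diagram.
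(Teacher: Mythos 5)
Your proposal is correct and follows essentially the same route as the paper: the paper's one-line proof ("follows from Corollary \ref{dividethm} and the diagram above") is precisely the base change to $FK$, the appeal to the ``only if'' direction of Theorem \ref{bigmnthm}(1) over $K(j(E))$, and the descent to $\Q(j(E))$ via the degree-two step and the dichotomy $[FK:F]\in\{1,2\}$, all of which you have simply written out in full. The final deduction of $\varphi(N)\mid\omega\cdot[F:\Q(j(E))]$ from Corollary \ref{dividethm} matches the paper as well.
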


\begin{proof}
This follows from Corollary \ref{dividethm} and the diagram above. 
\end{proof}

Following \cite{BC2}, for any imaginary quadratic order $\OO$ and integers $M \mid N$, we let $T^{\circ}(\OO,M,N)$ denote the least degree of an extension $F/\Q(j(E))$ in which an $\OO$-CM elliptic curve $E/F$ has $\Z/M\Z \times \Z/N\Z \hookrightarrow E(F)$. In particular, $F$ need not contain the CM field $K$. We note  $T^{\circ}(\OO,M,N)=2^{\epsilon}\cdot T(\OO,M,N)$, where $\epsilon \in \{0,1\}$. Explicit formulas for $T^{\circ}(\OO,M,N)$ for fixed $\OO$ are computed in \cite[$\S8$]{BC2}.

In the case where $M=1$, we use the streamlined notation $T(\OO,N) \coloneqq T(\OO,1,N)$ and $T^{\circ}(\OO,N) \coloneqq T^{\circ}(\OO,1,N)$. We have the following description of $T^{\circ}(\OO,N)$, which follows from Theorems 1.3, 6.1, 6.2, and 6.6 in \cite{BC2}.

\begin{thm}[Bourdon, Clark \cite{BC2}] \label{LeastDegN}
Let $\OO$ be an imaginary quadratic order of conductor $f$ in $K$.  Let $N \in \Z^+$ have prime power decomposition 
$\ell_1^{a_1} \cdots \ell_r^{a_r}$ with $\ell_1 < \ldots < \ell_r$. The least degree over $\Q(j(E))$ in which there is an $\OO$-CM elliptic curve $E$ with a rational point of order $N$ is $T(\OO,N)$ if and only if $T^{\circ}(\OO,\ell_i^{a_i})=T(\OO,\ell_i^{a_i})$ for all $1 \leq i \leq r$. Otherwise the least degree is $2\cdot T(\OO,N)$. Moreover, $T^{\circ}(\OO,\ell_i^{a_i})=T(\OO,\ell_i^{a_i})$ if and only if one of the following holds, where $c_i \coloneqq \ord_{\ell_i}(f)$:
\begin{enumerate}
\item $\ell_i$ is inert in $\OO$
\item $\ell_i^{a_i}=2$ and is split or ramified in $\OO$
\item $\ell_i^{a_i}=2^{a_i}$ where $2$ is ramified in $\OO$ but not in $K$, $c_i \geq 2$, and $a_i  \leq 2c_i-2$  
\item $\ell_i^{a_i}=2^{a_i}$ where $2$ is ramified in $K$ and $c_i=0$ 
\item $\ell_i^{a_i}=2^{a_i}$ where $\ord_2(\Delta_K)=2$, $c_i \geq 1$, and $a_i  \leq 2c_i$  
\item $\ell_i^{a_i}=2^{a_i}$ where $\ord_2(\Delta_K)=3$, $c_i \geq 1$
\item $\ell_i>2$ is ramified in $\OO$ but split in $K$ and $a_i \leq 2c_i$
\item $\ell_i>2$ is ramified in $\OO$ and not split in $K$ 
\end{enumerate}
\end{thm}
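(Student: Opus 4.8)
The plan is to reduce everything to the dichotomy recorded in the factorization $T^{\circ}(\OO,N) = 2^{\epsilon}\cdot T(\OO,N)$ with $\epsilon \in \{0,1\}$, so that the entire content becomes deciding when $\epsilon = 0$. The point $\epsilon$ records is whether the least-degree field $F/\Q(j(E))$ carrying an $\OO$-CM point of order $N$ must contain $K$: referring to the field diagram, $FK/F$ has degree $1$ or $2$, and $T^{\circ}=T$ precisely when this degree can be made $1$, i.e. when the minimal field over $\Q(j(E))$ already realizes the point without adjoining $K$. So I would first set up the mod-$N$ representation of $\Gal(\overline{\Q(j(E))}/\Q(j(E)))$ attached to an $\OO$-CM curve $E/\Q(j(E))$, whose image lies in the normalizer $\mathcal{N} = \mathcal{C} \rtimes \langle \tau \rangle$ of the Cartan subgroup $\mathcal{C} \cong (\OO/N\OO)^{\times}$, with $\tau$ the class of complex conjugation. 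Over $K(j(E))$ the image lies in $\mathcal{C}$, and the extra generator $\tau$ is exactly the obstruction measured by $\epsilon$. Concretely, if $P$ has order $N$ then $K \not\subseteq F$ if and only if the stabilizer of $P$ in $\mathcal{N}$ contains an anti-holomorphic element.

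Next I would translate degrees into group-theoretic indices, using the orbit–stabilizer computations already packaged in Theorem \ref{bigmnthm}: the least degree over $K(j(E))$ of a point of order $N$ — taken up to the $\OO^{\times}$-action, which accounts for the $\omega$ appearing in the formulas — is $T(\OO,N)$, realized by a point $P$ whose $\mathcal{C}$-stabilizer is as large as possible. The condition $\epsilon = 0$ then becomes: some minimal such $P$ admits an element of $\mathcal{N}\setminus\mathcal{C}$, an anti-holomorphic element of the shape $\tau\cdot u$, fixing $P$ up to a unit of $\OO$. Equivalently, the $\mathcal{C}$-orbit of the Weber class of a minimal $P$ is stable under the involution induced by complex conjugation. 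This single clean criterion drives the whole statement.

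For the reduction to prime powers I would use the Chinese Remainder Theorem: a point of order $N = \ell_1^{a_1}\cdots\ell_r^{a_r}$ decomposes as an independent sum of points of order $\ell_i^{a_i}$, and complex conjugation acts on $\OO/N\OO = \prod_i \OO/\ell_i^{a_i}\OO$ diagonally through a single involution $\tau$ compatible with all components simultaneously. Hence a minimal order-$N$ point is fixed up to units by an anti-holomorphic element $\tau\cdot u$ if and only if each order-$\ell_i^{a_i}$ component is individually fixed by such an element, the global unit group being the diagonally embedded $\OO^{\times}$ of size $\omega$. This yields exactly the claimed equivalence $T^{\circ}(\OO,N)=T(\OO,N)$ iff $T^{\circ}(\OO,\ell_i^{a_i})=T(\OO,\ell_i^{a_i})$ for every $i$, and isolates all remaining work in the prime-power case. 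It also resolves the naive worry that a compositum of $K$-avoiding fields might reintroduce $K$, since $K$-avoidance has been reframed as the purely local existence of a fixing anti-holomorphic element.

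The main obstacle, and the heart of the proof, is the explicit determination of when this anti-holomorphic involution exists for $N=\ell^a$, which produces the eight cases. The action of $\tau$ on $(\OO/\ell^a\OO)^{\times}$ depends on the splitting type of $\ell$ and on $c = \ord_{\ell}(f)$: for $\ell$ inert, $\mathcal{C}$ is a quotient of the units of the unramified quadratic ring and $\tau$ is its ring involution, always permitting a $\tau$-fixed minimal point (case 1); for $\ell$ split or ramified one must analyze the structure of $\OO/\ell^a\OO$ together with its filtration by powers of the conductor, and here the distinction between ramification coming from $K$ versus from the conductor $f$, compounded by the delicate numerology at $\ell = 2$, forces the inequalities $a \leq 2c-2$, $a \leq 2c$, and $a \leq 2c+1$ appearing in cases (3)–(8). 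Carrying out this filtration analysis — matching valuations so that a $\tau$-fixed generator of the correct order exists — is where essentially all the technical difficulty lies, and it is precisely this computation that is assembled from Theorems 1.3, 6.1, 6.2, and 6.6 of \cite{BC2}.
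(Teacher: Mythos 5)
First, note that the paper does not actually prove Theorem \ref{LeastDegN}: it is imported from \cite{BC2} with the one-line attribution that it follows from Theorems 1.3, 6.1, 6.2, and 6.6 there, so there is no internal proof to compare against line by line. Your sketch is a faithful reconstruction of the strategy of \cite{BC1,BC2}: the mod-$N$ image of $\Gal(\overline{\Q(j(E))}/\Q(j(E)))$ lies in the normalizer of a Cartan subgroup $(\OO/N\OO)^{\times}$, the dichotomy $T^{\circ}\in\{T,2T\}$ is exactly the question of whether a minimal field can avoid containing $K$ (equivalently, whether the stabilizer of a minimal point contains an anti-holomorphic element, up to the diagonal $\OO^{\times}$), and the CRT decomposition of $\OO/N\OO$ reduces this existence question to one at each prime power. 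Those reductions are essentially right, including the implicit observation that the least degree over $\Q(j(E))$ is a multiple of $T$ lying between $T$ and $2T$, hence equals $T$ or $2T$.

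The genuine gap is that the statement to be proved consists almost entirely of the ``moreover'' clause --- the explicit eight-case determination of when $T^{\circ}(\OO,\ell^{a})=T(\OO,\ell^{a})$ --- and your proposal does not derive a single one of these cases. You describe the shape of the computation (the filtration of $(\OO/\ell^{a}\OO)^{\times}$ coming from the conductor, the action of complex conjugation, the special behavior at $\ell=2$) and then state that the computation is ``assembled from Theorems 1.3, 6.1, 6.2, and 6.6 of \cite{BC2}'' --- that is, you cite the very results whose content is being asserted, which is circular as a proof. Nothing in your argument would let a reader decide, say, why inertness always gives $T^{\circ}=T$ (one must exhibit an anti-holomorphic element of the nonsplit Cartan normalizer fixing a point of exact order $\ell^{a}$), or why the cutoff in case (3) is $a\le 2c-2$ rather than $2c$; indeed the bound $a\le 2c+1$ you mention does not occur among the eight cases at all but belongs to the formulas for $\widetilde{T}$ in Theorem \ref{bigmnthm}. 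Since the paper likewise outsources exactly this content to \cite{BC2}, your sketch matches the paper's treatment in spirit, but as a self-contained proof it is missing its core.
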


Let $E/F$ be an $\OO$-CM elliptic curve and $P \in E$ a point of order $N$. If $[F:\Q]=T^{\circ}(\OO,N)\cdot h(\OO)$, then $F=\Q(j(E),\mathfrak{h}(P))$, where $\mathfrak{h}: E \rightarrow E/\Aut(E)\cong \mathbb{P}^1$ is a Weber function on $E$. Moreover, if $\psi:E \rightarrow E'$ is an isomorphism, then $\mathfrak{h}(P)=\mathfrak{h}(\psi(P))$ by \cite[p.107]{shimura}. It follows that for any $P \in E$, the fields $K(j(E),\mathfrak{h}(P))$ and $\Q(j(E),\mathfrak{h}(P))$ do not depend on the chosen Weierstrass equation for $E$. See \cite[$\S2.4$]{BC2} and \cite[$\S7A$]{BC1} for additional details.

\section{Determining the Exponent of New Subgroups}

Let $p>5$ be a prime number, and suppose $F$ is a number field of degree $2p$. Let $E/F$ be a CM elliptic curve with $E(F)_{\tors} \cong \Z/M \Z \times \Z/N \Z$ for $M \mid N$. By definition, this torsion subgroup is \textbf{new} if it does not occur as the torsion subgroup of a CM elliptic curve defined over a number field of degree $1$, $2$, or $p$. However, every CM torsion subgroup arising in degree 1 also arises in degree 2, and there are \emph{no} new torsion subgroups of CM elliptic curves in prime degree $p>5$ by \cite[Theorem 1.4]{BCS}. Thus $E(F)_{\tors}$ is new if and only if it does not occur in degree 2.

In this section, we will determine the possible exponents of a new CM torsion subgroup in degree $2p$. 
If $E(F)_{\tors}\cong \Z/M \Z \times \Z/N \Z$ is a new torsion subgroup, then either $N$ appears already as the exponent of a CM torsion subgroup in degree 2 and $N \in \{1,2,3,4,6,7,10\}$ by \cite[$\S4.2$]{tor2}, or else it has exponent outside this list. We say $E(F)_{\tors}$ has a \textbf{new exponent $N$} if $E(F)_{\tors}$ is new and $N \not\in\{1,2,3,4,6,7,10\}$.

\subsection{Two Preliminary Lemmas} By Corollary \ref{dividethm}, if $\OO$ is an order in an imaginary quadratic field and $N \in \Z^+$, then
\[
\varphi(N) \mid \omega \cdot T(\OO,N).
\]
Since $T^{\circ}(\OO,N) \in \{T(\OO,N), 2\cdot T(\OO,N)\}$, this implies $\varphi(N) \mid \omega \cdot T^{\circ}(\OO,N)$. The following lemma shows equality can hold under only very specific conditions.
\begin{lem} \label{BabyLem}
Let $N \in \Z^{\geq 4}$ have prime power decomposition $\ell_1^{a_1} \cdots \ell_r^{a_r}$ with $\ell_1 < \cdots <\ell_r$, and let $\OO$ be an imaginary quadratic order of discriminant $\Delta$. If $\varphi(N)= \omega \cdot T^{\circ}(\OO,N)$, then every $\ell_i$ with $\ell_i^{a_i}\geq3$ is ramified in $\OO$. If $\ell_i^{a_i}=2$, then $2$ is split or ramified in $\OO$.
\end{lem}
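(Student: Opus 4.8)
The plan is to reduce the claim to a prime-by-prime comparison using the product formula for $T(\OO,N)$, and then to feed the outcome into the characterization of when $T^{\circ}$ equals $T$. Since $N \geq 4$, Theorem \ref{bigmnthm}(5) gives $\omega \cdot T(\OO,N) = \prod_{i=1}^r \widetilde{T}(\OO,1,\ell_i^{a_i})$, while $\varphi(N) = \prod_{i=1}^r \varphi(\ell_i^{a_i})$ with $\varphi(\ell_i^{a_i}) = \ell_i^{a_i-1}(\ell_i-1)$. I would first define, for each $i$, the ratio $R_i := \widetilde{T}(\OO,1,\ell_i^{a_i})/\varphi(\ell_i^{a_i})$ and verify, by running through the cases of Theorem \ref{bigmnthm}(3)--(4), that each $R_i$ is a positive integer. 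Concretely: if $\ell_i$ is inert then $R_i = \ell_i^{a_i-1}(\ell_i+1) > 1$; if $\ell_i$ is split then $R_i = 1$; and if $\ell_i$ is ramified then a short computation gives $R_i = 1$ when $a_i$ is small relative to $c_i := \ord_{\ell_i}(f)$ and $R_i = \ell_i^{a_i-2c_i-1}$ or $\ell_i^{a_i-2c_i}$ (hence $>1$) otherwise. The case $\ell_i^{a_i}=2$ is handled separately via Theorem \ref{bigmnthm}(3), where $R_i = \widetilde{T}(\OO,1,2)$ equals $3$ when $2$ is inert and $1$ when $2$ is split or ramified.

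With these ratios in hand I would combine them. Writing $T^{\circ}(\OO,N) = 2^{\epsilon}\, T(\OO,N)$ with $\epsilon \in \{0,1\}$, the hypothesis $\varphi(N) = \omega \cdot T^{\circ}(\OO,N)$ becomes $2^{\epsilon}\prod_{i=1}^r R_i = 1$. Since $2^{\epsilon} \geq 1$ and each $R_i$ is a positive integer, this forces $\epsilon = 0$ and $R_i = 1$ for every $i$. The vanishing of $\epsilon$ says exactly that $T^{\circ}(\OO,N) = T(\OO,N)$, and the equalities $R_i = 1$ immediately exclude every inert prime, since an inert $\ell_i$ contributes $R_i = \ell_i^{a_i-1}(\ell_i+1) \geq 3$.

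It remains to constrain the split primes, and this is where the distinction between equaling $T$ and equaling $2T$ does the real work. Because $T^{\circ}(\OO,N) = T(\OO,N)$, Theorem \ref{LeastDegN} requires $T^{\circ}(\OO,\ell_i^{a_i}) = T(\OO,\ell_i^{a_i})$ for each $i$, so each $\ell_i^{a_i}$ must satisfy one of the eight listed conditions. Scanning that list, the only condition compatible with $\ell_i$ being split in $\OO$ (that is, $\left(\frac{\Delta}{\ell_i}\right)=1$) is condition (2), namely $\ell_i^{a_i}=2$: conditions (1) and (3)--(8) all force $\ell_i$ to be inert or ramified in $\OO$. Hence any split $\ell_i$ must have $\ell_i=2$ and $a_i=1$. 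Combined with the exclusion of inert primes, this gives precisely the stated dichotomy: each $\ell_i$ is ramified in $\OO$, or else $\ell_i=2$ is split in $\OO$ with $a_i=1$.

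I expect the main obstacle to be the bookkeeping in the first step—confirming that $\varphi(\ell^a) \mid \widetilde{T}(\OO,1,\ell^a)$ with the correct value of $R_i$ across all the ramified subcases (iii)--(v), which involve the conductor valuation $c_i$ and the threshold inequalities on $a_i$. The conceptual core, by contrast, is the clean observation that the $\varphi$-divisibility pins down $\prod_i R_i = 1$, which simultaneously kills the inert primes and forces $\epsilon = 0$; Theorem \ref{LeastDegN} then supplies the finishing constraint that the only admissible split prime is $\ell=2$ with exponent one.
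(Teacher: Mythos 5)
Your proof is correct and uses the same two ingredients as the paper's: the product formula of Theorem \ref{bigmnthm}(5) together with $\varphi(\ell_i^{a_i}) \mid \widetilde{T}(\OO,\ell_i^{a_i})$ to force equality prime-by-prime (killing inert primes), and the characterization in Theorem \ref{LeastDegN} to show a split prime can only contribute $\ell_i^{a_i}=2$. You merely reorder the two steps and make the ratio bookkeeping explicit, so this is essentially the paper's argument.
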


\begin{proof}
Suppose $\ell \mid N$ is prime and $\ord_{\ell}(N)=a$ with $\ell^a \geq 3$, and suppose $\varphi(N)= \omega \cdot T^{\circ}(\OO,N)$. In particular, this implies $T(\OO,N)=T^{\circ}(\OO,N)$ by Corollary \ref{dividethm}, and so by Theorem \ref{LeastDegN}, we have $T(\OO,\ell^a)=T^{\circ}(\OO,\ell^a)$. Then $\left(\frac{\Delta}{\ell}\right)\neq 1$ by Theorem \ref{LeastDegN}.
Suppose $\left(\frac{\Delta}{\ell}\right)=-1.$ Recall from Theorem \ref{bigmnthm} that since $N \geq 4$,
\[ \omega \cdot T(\OO,N) = \displaystyle\prod_{i=1}^r \ \widetilde{T}(\OO,\ell_i^{a_i}). \] If $\varphi(N)=\omega \cdot T(\OO,N)$, we must have $\varphi(N)=\displaystyle\prod_{i=1}^r \ \widetilde{T}(\OO,\ell_i^{a_i}).$ Moreover, since $\varphi(\ell_i^{a_i}) \mid \widetilde{T}(\OO,\ell_i^{a_i})$ for all $i$, we must have $\varphi(\ell^a)=\widetilde{T}(\OO,\ell^{a}).$  By Theorem \ref{bigmnthm} we have \[\widetilde{T}(\mathcal{O},\ell^a)=\ell^{2a-2}(\ell^2-1)=(\ell^{a-1})(\ell^{a-1}) (\ell-1)(\ell+1)>\varphi(\ell^a).\]
We have reached a contradiction. The same kind of calculation shows 2 cannot be inert in $\OO$.
\end{proof}

\begin{lem}\label{Lemma3.2}
Let $E/F$ be an $\OO$-CM elliptic curve with an $F$-rational point of order $N$ for $N \in \Z^+$. If $\omega \cdot [F:\Q(j(E))]=\varphi(N)$, then 
\[
T(\OO,N)=T^{\circ}(\OO,N)=[F:\Q(j(E))].
\]
\end{lem}
\begin{proof}
By Corollaries \ref{dividethm} and \ref{dividethmGen} we have
\[
\varphi(N) \mid \omega \cdot T(\OO,N) \mid \omega \cdot [F:\Q(j(E))] = \varphi(N),
\]
from which we conclude equality holds throughout. Thus $T(\OO,N)=T^{\circ}(\OO,N)=[F:\Q(j(E))]$.
\end{proof}

\subsection{Determining new exponents}
Let $F$ be a number field of degree $2p$, for $p>5$ prime. If $E/F$ is an $\OO$-CM elliptic curve with a point of order $N$, then $h(\OO)=[\Q(j(E)):\Q] \in \{1,2,p, 2p\}$. We will consider each case separately in a series of lemmas. One important ingredient is the following theorem of Parish.
\begin{thm}[Parish, {\cite[$\S6$]{Parish89}}] \label{ParishThm}
Let $E$ be a CM elliptic curve defined over $F=\Q(j(E))$. Then $E(F)_{\tors}$ is isomorphic to one of the following groups: the trivial group $\{\cdot\}, \Z/2\Z, \Z/3\Z, \Z/4\Z, \Z/6\Z$, or $\Z/2\Z\times \Z/2\Z$.
\end{thm}

\noindent All other cases build upon Theorems \ref{bigmnthm} and \ref{LeastDegN} in combination with Lemma \ref{BabyLem}.

\begin{lem}
Let $F$ be a number field of degree $2p$. Suppose $E/F$ is an $\OO$-CM elliptic curve, where $h(\OO)=2p$. Then $E(F)_{\tors}$ is not new.
\end{lem}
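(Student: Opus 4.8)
The plan is to reduce to the field of moduli and then read the torsion directly off the divisibility constraints already established. Because $\Q(j(E)) \subseteq F$ and $[\Q(j(E)):\Q] = h(\OO) = 2p = [F:\Q]$, comparing degrees forces $F = \Q(j(E))$; in the field diagram above this says $[F:\Q(j(E))] = 1$ and $FK = K(j(E))$. I would also pin down the unit count at the outset: the only imaginary quadratic orders with $\omega > 2$ are the two maximal orders of discriminant $-3$ and $-4$, each of class number $1$. Since $h(\OO) = 2p > 10$, we must have $\Delta < -4$ and hence $\omega = 2$ throughout.

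Write $E(F)_{\tors} \cong \Z/M\Z \times \Z/N\Z$ with $M \mid N$. Applying Corollary \ref{dividethmGen} to an $F$-rational point of order $N$ gives $\varphi(N) \mid \omega \cdot [F:\Q(j(E))] = 2$, so the exponent satisfies $N \in \{1,2,3,4,6\}$; in particular $N$ already lies in the set $\{1,2,3,4,6,7,10\}$ of exponents realized in degree $2$. It then remains only to control the group structure, i.e.\ the possible values of $M$. For this I would pass to $FK = K(j(E))$: since $\Z/M\Z \times \Z/N\Z \hookrightarrow E(F) \subseteq E(FK)$, the divisibility in Theorem \ref{bigmnthm}(1) gives $T(\OO,M,N) \mid [FK:K(j(E))] = 1$, so $T(\OO,M,N) = 1$.

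Running Theorem \ref{bigmnthm} with $\omega = 2$ for each admissible $N$ then leaves only a short list of groups. The decisive exclusions come from full $\ell$-torsion: by Theorem \ref{bigmnthm}(2) one has $T(\OO,3,3) = 3 - \left(\frac{\Delta}{3}\right) \geq 2$, and the prime-power formulas give $T(\OO,4,4) \geq 2$, so any group containing $\Z/3\Z \times \Z/3\Z$ or $\Z/4\Z \times \Z/4\Z$ (such as $\Z/3\Z \times \Z/6\Z$ or $\Z/6\Z \times \Z/6\Z$) cannot occur over $\Q(j(E))$. The groups that survive $T(\OO,M,N) = 1$ are exactly the trivial group, $\Z/N\Z$ for $N \in \{2,3,4,6\}$, and $\Z/2\Z \times \Z/N\Z$ for $N \in \{2,4,6\}$, each of which already appears over a quadratic field by the degree-$2$ classification \cite{tor2} (compare the list in Corollary \ref{fourteen}). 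Hence $E(F)_{\tors}$ occurs in degree $2$ and is not new.

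The only genuine work is the finite case analysis in the last step, where one must confirm for every $M \mid N$ with $N \in \{1,2,3,4,6\}$ that $T(\OO,M,N) = 1$ forces a group on the degree-$2$ list; the point to get right is the exclusion of the full-torsion groups via $T \geq 2$, since the exponent bound alone does not rule these out. This case analysis can be bypassed entirely: because $F = \Q(j(E))$ is precisely the field of moduli, Parish's classification \cite{Parish89} of torsion on CM elliptic curves defined over $\Q(j(E))$ directly yields a short list of groups, all realized in degree at most $2$.
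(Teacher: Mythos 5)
Your proposal is correct, and your closing remark is in fact the paper's entire proof: the authors simply observe that $h(\OO)=2p=[F:\Q]$ forces $F=\Q(j(E))$ and then invoke Parish's classification \cite{Parish89} of torsion for CM elliptic curves defined over $\Q(j(E))$, which yields only groups already occurring over $\Q$ (Olson's list \cite{Olson74}), hence nothing new. The main body of your argument is a genuinely different, self-contained route: after reducing to $F=\Q(j(E))$ you rederive the conclusion from Theorem \ref{bigmnthm} and Corollary \ref{dividethmGen}, bounding the exponent via $\varphi(N)\mid\omega\cdot[F:\Q(j(E))]=2$ and then excluding the full-torsion groups $\Z/3\Z\times\Z/3\Z$ and $\Z/4\Z\times\Z/4\Z$ via $T(\OO,M,N)\geq 2$. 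This is sound (you correctly only use the exclusion direction of $T(\OO,M,N)\mid[FK:K(j(E))]=1$, and every surviving group does lie on the degree-$2$ list of \cite{tor2}), and it has the virtue of not requiring Parish's theorem; note only that your surviving list is a proper superset of what actually occurs over $\Q(j(E))$ (e.g.\ $\Z/2\Z\times\Z/4\Z$ passes your $T=1$ test but is not on Parish's list), which is harmless here since you need only containment in the degree-$2$ list. The trade-off is length: the citation of \cite{Parish89} replaces your finite case analysis with one sentence, which is why the paper takes that route.
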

\begin{proof}
Here, $F=\Q(j(E))$ and $E(F)_{\tors}$ is one of the groups arising over $\Q$ by Theorem \ref{ParishThm}.
\end{proof}

\begin{lem} \label{Lem3.3}
Let $F$ be a number field of degree $2p$ for $p>5$. Suppose $E/F$ is an $\OO$-CM elliptic curve, where $h(\OO)=p$. Then $E(F)_{\tors}\cong \Z/M\Z\times\Z/N\Z$ for $M \mid N$ and $N \in \{1,2,3,4,6\}$.
\end{lem}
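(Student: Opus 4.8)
The plan is to bound the exponent $N$ of $E(F)_{\tors} \cong \Z/M\Z \times \Z/N\Z$ and show $N \in \{1,2,3,4,6\}$. Since $j(E) \in F$, we have $\Q(j(E)) \subseteq F$ with $[\Q(j(E)):\Q] = h(\OO) = p$, so $[F:\Q(j(E))] = 2$; and because $h(\OO) = p > 1$ forces $\Delta < -4$, we have $\omega = 2$. As $E$ has an $F$-rational point of order $N$, Corollary \ref{dividethmGen} gives $T(\OO,N) \mid 2$ and $\varphi(N) \mid \omega \cdot 2 = 4$. The single divisibility $\varphi(N) \mid 4$ already leaves exactly $N \in \{1,2,3,4,5,6,8,10,12\}$, so it remains only to rule out $N \in \{5,8,10,12\}$, each of which satisfies $\varphi(N) = 4$.

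First I would pin down $T$ and $T^{\circ}$ for these four values. Since $4 = \varphi(N) \mid \omega\, T(\OO,N) = 2\,T(\OO,N)$ while $T(\OO,N) \mid 2$, we must have $T(\OO,N) = 2$. On the other hand $T^{\circ}(\OO,N)$ is the least degree over $\Q(j(E))$ in which some $\OO$-CM elliptic curve acquires a point of order $N$; as $E/F$ does so in degree $2$, we get $T^{\circ}(\OO,N) \le 2$, and since $T^{\circ} \ge T = 2$ this forces $T^{\circ}(\OO,N) = 2 = T(\OO,N)$. Hence $\varphi(N) = \omega \cdot T^{\circ}(\OO,N)$, so Lemma \ref{BabyLem} applies: every prime $\ell \mid N$ is ramified in $\OO$, with the sole exception that $\ell = 2$ may instead be split in $\OO$ with $\ord_2(N) = 1$. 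Reading this off for each target value, $N = 5$ and $N = 10$ force $5$ to be ramified in $\OO$; $N = 8 = 2^3$ forces $2$ to be ramified (its exponent $3 \neq 1$); and $N = 12 = 2^2 \cdot 3$ forces both $2$ and $3$ to be ramified.

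The final step, which I expect to be the main obstacle, is to show that each of these ramification patterns is incompatible with $h(\OO) = p$ being an odd prime. The naive hope — that two ramified primes force $2 \mid h(\OO)$ by genus theory — is false for non-maximal orders: for instance $\Delta = -12$ has both $2$ and $3$ ramified yet $h(\OO) = 1$. The correct argument couples the conclusion of Lemma \ref{BabyLem} with the precise conditions of Theorem \ref{LeastDegN} under which $T^{\circ}(\OO,\ell^{a}) = T(\OO,\ell^{a})$, and then inspects the relevant local factor of the ring class number formula \eqref{RINGCLASSDEGEQ}. Concretely, when an odd prime $\ell \mid \Delta$ divides the conductor but not $\Delta_K$, the corresponding factor $\ell^{c-1}\!\left(\ell - \left(\tfrac{\Delta_K}{\ell}\right)\right)$ with $c = \ord_\ell(f)$ is even, forcing $2 \mid h(\OO)$; when $\ell \mid \Delta_K$ with $\ell$ odd and $\Delta_K \neq -3$, a second prime necessarily divides $\Delta_K$ (for $\ell \in \{3,5\}$ the only imaginary fundamental discriminant that is a power of a single odd prime is $\Delta_K = -3$), so $2 \mid h_K \mid h(\OO)$; and in the remaining delicate cases — where for $N = 8$ the prime $2$ is the only ramified prime, or for $N = 12$ one has $\Delta_K = -3$ — the Theorem \ref{LeastDegN} conditions needed for $T^{\circ}(\OO,2^{a}) = T(\OO,2^{a})$ force $\ord_2(f)$ large enough that the $2$-adic valuation of the right-hand side of \eqref{RINGCLASSDEGEQ} is positive, again yielding $2 \mid h(\OO)$. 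In every case this contradicts $h(\OO) = p$ odd, so none of $N \in \{5,8,10,12\}$ can occur and $N \in \{1,2,3,4,6\}$, as claimed.
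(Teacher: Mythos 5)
Your reduction is identical to the paper's: $[F:\Q(j(E))]=2$, $\omega=2$, so $\varphi(N)\mid 4$ and $N\in\{1,2,3,4,5,6,8,10,12\}$; for the four values with $\varphi(N)=4$ the squeeze $4\mid 2T(\OO,N)\mid 2[F:\Q(j(E))]=4$ forces $T(\OO,N)=T^{\circ}(\OO,N)=2$, and Lemma \ref{BabyLem} then pins down the ramification of the primes dividing $N$. Up to this point the proposal is correct and matches the paper step for step (the paper treats $N=10$ by observing no point of order $5$ exists, you treat it directly through Lemma \ref{BabyLem}; both are fine). The divergence is in the endgame. The paper simply quotes the classification of imaginary quadratic orders of odd class number (Lemma 3.5 of \cite{BCS}): $h(\OO)=p$ odd forces $\Delta=-2^{\epsilon}\ell^{2a+1}$ with $\epsilon\in\{0,2\}$ and $\ell\equiv 3\pmod 4$, which kills $N=5$ instantly and, for $N=8,12$, shows $2$ ramifies in $\OO$ only via the conductor with $\ord_2(f)=1$ and $2$ unramified in $K$, whence $T(\OO,2^a)<T^{\circ}(\OO,2^a)$ by Theorem \ref{LeastDegN}. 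You instead propose to re-derive the needed parity facts from \eqref{RINGCLASSDEGEQ} and genus theory.

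That route can be made to work, but as written it is a sketch with a genuine hole in the ``delicate'' $2$-adic cases. You assert that the conditions of Theorem \ref{LeastDegN} guaranteeing $T^{\circ}(\OO,2^{a})=T(\OO,2^{a})$ ``force $\ord_2(f)$ large enough that the $2$-adic valuation of the right-hand side of \eqref{RINGCLASSDEGEQ} is positive.'' This is false for condition (4) of Theorem \ref{LeastDegN} ($2$ ramified in $K$ and $c=\ord_2(f)=0$): there $\Delta=\Delta_K$ is even, the conductor contributes nothing, and $h(\OO)=h_K$ can be odd (e.g.\ $\Delta=-8$ has $h=1$), so the mechanism ``$2\mid h(\OO)$'' does not apply. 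The case is still excluded, but by a different argument: genus theory forces $h_K$ even unless $\Delta_K$ has a single prime divisor, i.e.\ $\Delta_K\in\{-4,-8\}$, and then $h(\OO)=1\neq p>5$. A similar caveat applies to your claim that the local factor $\ell^{c-1}\bigl(\ell-\bigl(\tfrac{\Delta_K}{\ell}\bigr)\bigr)$ being even forces $2\mid h(\OO)$: one must check that the division by $w_K/2$ when $K=\Q(i)$ or $\Q(\sqrt{-3})$ does not destroy the factor of $2$ (it does not, since $\bigl(\tfrac{\Delta_K}{\ell}\bigr)=\pm1$ makes the factor divisible by $4$ or the divisor is odd, but this needs to be said). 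So: same skeleton, but your final step needs these subcases filled in, or else replaced by the citation the paper uses.
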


\begin{proof}
Note in this case $[F:\Q(j(E))] = 2$, which means $\varphi(N) \mid \omega \cdot 2$ by Corollary \ref{dividethmGen}. As $h(\OO)=p$, we have $\omega=2$, and so $N \in \{1, 2, 3, 4, 5, 6, 8, 10, 12\}$. If $N$ is a new exponent, then $N\in \{5, 8, 12\}$. We will show these do not occur, and we will also rule out $N=10$.

Suppose $N \in \{5, 8, 12\}$. Then $\varphi(N)=4=2\cdot[F:\Q(j(E))]$, and by Lemma \ref{Lemma3.2} we have $T(\OO,N)=T^{\circ}(\OO,N)=2$. Thus by Lemma \ref{BabyLem} each prime dividing $N$ is ramified in $\OO$. Since $h(\OO)=p>5$, in particular the class number is odd, and so $\Delta(\OO)=-2^{\epsilon} \cdot \ell^{2a+1}$ for $\epsilon \in \{0,2\}$ and $\ell \equiv 3 \pmod 4$ prime; see, for example, Lemma 3.5 of \cite{BCS}. This shows immediately that $N \neq 5$. So suppose $N =8$. Then $\epsilon=2$, and $\OO$ is an order of conductor $2\ell^a$, where 2 is split or inert in the corresponding imaginary quadratic field $K=\Q(\sqrt{-\ell})$. Then $T(\OO,2^3) < T^{\circ}(\OO,2^3)$ by Theorem \ref{LeastDegN}, which gives a contradiction. Similarly, if $N=12$, we find  $T(\OO,2^2) < T^{\circ}(\OO,2^2)$, and so $T(\OO,12) < T^{\circ}(\OO,12)$ by Theorem \ref{LeastDegN}. 

Finally, we note $N \neq 10$, since $E$ cannot have a point of order 5 by the argument above.
\end{proof}

\begin{lem}
Let $F$ be a number field of degree $2p$ for $p>5$. Suppose $E/F$ is an $\OO$-CM elliptic curve, where $\OO$ has class number $2$. Then $E(F)_{\tors}$ has new exponent $N$ if and only if one of the following occurs:
\begin{enumerate}
\item  $N =23$, $p=11$, and $\Delta(\OO)=-115$.
\item  $N=47$, $p=23$, and $\Delta(\OO)=-235$.
\end{enumerate}
\end{lem}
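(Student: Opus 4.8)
The plan is to convert the hypothesis $h(\OO)=2$ into a congruence constraint on $N$, then force ramification via Lemma \ref{BabyLem} and finish with a finite enumeration of class number $2$ discriminants.

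First I would record the numerical data. Since $h(\OO)=2>1$ we have $\Delta<-4$, hence $\omega=2$, and $[F:\Q(j(E))]=2p/2=p$. If $E(F)_{\tors}$ has a point of order $N$, Corollary \ref{dividethmGen} gives $T(\OO,N)\mid p$ and $\varphi(N)\mid 2p$. For a \emph{new} exponent we need $N\notin\{1,2,3,4,6,7,10\}$; examining the divisors $\{1,2,p,2p\}$ of $2p$ and using that $\varphi(N)$ is even for $N\ge 3$ while $p$ is odd, the only surviving possibility is $\varphi(N)=2p$.

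Next I would classify all $N$ with $\varphi(N)=2p$. A short multiplicativity argument does this: each odd prime $\ell\mid N$ satisfies $\ell-1\mid 2p$ with $\ell-1\ge 2$, so $\ell-1\in\{2,p,2p\}$, forcing $\ell\in\{3,2p+1\}$ (the value $\ell=p+1$ is excluded by parity), while the $2$-part of $N$ is at most $4$. Ruling out $\ell=3$ and the $4$-part (neither can be combined to give $\varphi(N)=2p$ for $p>5$) shows that $2p+1$ must be prime and $N\in\{2p+1,\ 2(2p+1)\}$. At this point $\varphi(N)=2p=\omega\,T(\OO,N)$ together with $T(\OO,N)\mid p$ forces $T(\OO,N)=p$; and since $E/F$ realizes the point of order $N$ with $[F:\Q(j(E))]=p=T(\OO,N)$, minimality of $T^{\circ}$ gives $T^{\circ}(\OO,N)=T(\OO,N)$, so the hypothesis $\varphi(N)=\omega\,T^{\circ}(\OO,N)$ of Lemma \ref{BabyLem} holds. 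It then tells us every prime dividing $N$ is ramified in $\OO$, except that $2$ may instead be split with exponent $1$.

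The decisive step is the enumeration. Writing $\ell:=2p+1$, ramification forces $\ell\mid\Delta$, so $\ell$ is an odd prime factor of a class number $2$ discriminant with $\ell=2p+1$ for some prime $p>5$ (so $\ell\ge 15$). Running through the finite, known list of imaginary quadratic orders of class number $2$, the only such factorizations are $-115=-5\cdot 23$ with $\ell=23$, $p=11$, and $-235=-5\cdot 47$ with $\ell=47$, $p=23$. In both cases $\Delta\equiv 5\pmod 8$, so $2$ is inert; hence the option $N=2(2p+1)$ is killed by Lemma \ref{BabyLem}, leaving $N=2p+1$. For the converse I would verify that each order realizes its exponent: both are maximal ($f=1$) with $\ell$ ramified in $K$, so condition (8) of Theorem \ref{LeastDegN} gives $T^{\circ}(\OO,\ell)=T(\OO,\ell)=(\ell-1)/2=p$, whence there is an $\OO$-CM curve with a point of order $\ell$ over a field of degree $2\cdot p=2p$ over $\Q$; since $\ell\notin\{1,2,3,4,6,7,10\}$ this subgroup is new. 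I expect the enumeration over class number $2$ discriminants — confirming completeness of the list and that no other prime factor of the form $2p+1$ with $p>5$ prime appears — to be the main point requiring care; the remainder is bookkeeping with the formulas already recorded in Theorems \ref{bigmnthm} and \ref{LeastDegN}.
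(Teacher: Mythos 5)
Your proof is correct and follows essentially the same route as the paper: reduce to $\varphi(N)=2p$ via Corollary \ref{dividethmGen}, deduce $T(\OO,N)=T^{\circ}(\OO,N)=p$, invoke Lemma \ref{BabyLem} to force $2p+1$ ramified, and enumerate the class number $2$ discriminants to land on $-115$ and $-235$. The only cosmetic difference is in discarding $N=2(2p+1)$: the paper computes $T(\OO,2(2p+1))=3p$ from Theorem \ref{bigmnthm} and contradicts Corollary \ref{dividethmGen}, whereas you apply Lemma \ref{BabyLem} directly to the prime $2$ (inert since $\Delta\equiv 5\pmod 8$); both arguments rest on the same fact and are equally valid.
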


\begin{proof}
Suppose $E(F)_{\tors}\cong \Z/M\Z\times\Z/N\Z$ for $M \mid N$. Note in this case $[F:\Q(j(E))] = p$, which means $\varphi(N) \mid \omega \cdot p$ by Corollary \ref{dividethmGen}. As $h(\OO)=2$, we have $\omega=2$. If $\varphi(N) \mid 2p$, then $N=2^a\cdot q^b$ where $q$ is an odd prime and $a \leq 2$, for otherwise $\ord_2(\varphi(N))>\ord_2(2p)=1$. If $b=0$, then $N\in \{1,2,4\}$, so suppose $b>0$. It follows that $a \leq 1$, for otherwise $\ord_2(\varphi(N))>1$. Thus
\[
\varphi(N)=2^{a-1}\cdot q^{b-1}(q-1) \mid 2p.
\]
 If $q=3$, then the assumption that $p>5$ implies $N=3$ or $N=6$, so suppose $q \neq 3$. Then $q-1 \mid 2p$ implies $q-1=2p$, since both $p,q$ are odd and $q \neq 3$. That is, if $E(F)$ has a point of order $N$, then $N \in \{1,2,3,4,6,2p+1, 2\cdot (2p+1)\}$ where $2p+1$ is prime. If $N$ is a new exponent, then $N \not\in \{1,2,3,4,6\}$ by definition, and so $N \in \{2p+1, 2\cdot (2p+1)\}$.

Now, suppose $E/F$ has a point $P$ of order $N=2p+1$, where $2p+1$ is prime. Then $\varphi(N)=2p$, and by Lemma \ref{Lemma3.2}, we have $p=T(\OO,N)=T^{\circ}(\OO,N)$. Thus by Lemma \ref{BabyLem}, $N$ is ramified in $\OO$. That is, $N \mid f^2 \Delta_K$. Based on the formula for $h(\OO)$ (see equation \ref{RINGCLASSDEGEQ} in $\S2$) and the classification of imaginary quadratic fields of class numbers 1 and 2 (see, for example, \cite[p.229]{cohen1}), this can happen only if $N=23$ and $\Delta(\OO) =-115$ or $N=47$ and $\Delta(\OO) = -235$. Conversely, if $\Delta(\OO) =-115$, then there exists a point of order 23 in degree $11 \cdot [\Q(j(E)):\Q]=2\cdot 11$ by Theorem \ref{LeastDegN}. Similarly, if $\Delta(\OO) =-235$, then there exists a point of order 47 in degree $23 \cdot [\Q(j(E)):\Q]=2\cdot 23$.

Finally, suppose $E/F$ has a point $P$ of order $2 \cdot (2p+1)$, where $2p+1$ is prime. Then in particular $E$ has a point of order $2p+1$, and so by the previous paragraph either $2p+1=23$ and $\Delta(\OO)=-115$ or else $2p+1=47$ and $\Delta(\OO)=-235$. In each case, 2 is inert in $\OO$, and so by Theorem \ref{bigmnthm} 
$
T(\OO,2\cdot(2p+1))=3p,
$
and we have a contradiction by Corollary \ref{dividethmGen}.
\end{proof}

\begin{lem} \label{Lem3.5}
Let $F$ be a number field of degree $2p$ for $p>5$. Suppose $E/F$ is an $\OO$-CM elliptic curve, where $\OO$ has class number $1$ and $\Delta(\OO)<-4$. Then $E(F)_{\tors}$ has new exponent $N$ if and only if we are in one of the following cases:
\begin{enumerate}
\item $N =2p+1$ where $2p+1$ is a prime split in $\OO$ and $\left( \frac{\Delta}{2} \right) = -1$.
\item $N =2 \cdot(2p+1)$ where $2p+1$ is a prime split in $\OO$ and $\left( \frac{\Delta}{2} \right) \neq -1$.
\end{enumerate}
\end{lem}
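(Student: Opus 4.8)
The plan is to follow the template of the preceding lemmas: first use the divisibility constraints of Corollary~\ref{dividethmGen} to cut down to a finite list of candidate exponents $N$, and then decide which are realized in degree $2p$ and with which precise exponent. Since $h(\OO)=1$ we have $\Q(j(E))=\Q$, $\omega=2$, and $[F:\Q(j(E))]=2p$, so Corollary~\ref{dividethmGen} gives $\varphi(N)\mid 4p$ and $T(\OO,N)\mid 2p$. An elementary analysis of $\varphi(N)\mid 4p$ does most of the first cut: every odd prime $\ell\mid N$ satisfies $\ell-1\mid 4p$, hence $\ell\in\{3,5,2p+1,4p+1\}$; no odd prime divides $N$ twice and $p\nmid N$ (else $p-1\mid 4$, contradicting $p>5$); and $\ord_2(\varphi(N))\le 2$. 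After discarding the non-new exponents $\{1,2,3,4,6,7,10\}$, this leaves only $N\in\{5,8,12\}$ together with the families $2^i(2p+1)$, $3\cdot 2^i(2p+1)$, and $2^i(4p+1)$ permitted by the $2$-adic bound.

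The decisive step is a dichotomy on whether $K\subseteq F$. I would show that an $F$-rational point of order $N$ forces $K\subseteq F$ whenever $N$ is divisible by a prime $\ell$ split in $\OO$. Indeed, if $K\not\subseteq F$, the nontrivial element $c\in\Gal(FK/F)$ restricts to complex conjugation on $K$; since the CM is defined over $K(j(E))=K$, $c$ interchanges the two $\OO/\ell$-eigenlines of $E[\ell]$. A point $P$ of order $N$ fixed by $c$ cannot lie in a single eigenline, so its $\ell$-part has the form $Q+c(Q)$ with $Q$ in an eigenline; because $G_{FK}\subseteq G_K$ preserves the eigenlines, it fixes $Q$ and $c(Q)$ separately, whence the full $\ell$-power torsion becomes $FK$-rational. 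This contradicts the bound $T(\OO,\ell^{a},\ell^{a})\mid[FK:K]=2p$ from Theorem~\ref{bigmnthm} (e.g. $T(\OO,q,q)=2p^{2}$ for $q=2p+1$ split). Granting $K\subseteq F$, one gets $[F:K]=p$ and, applying Theorem~\ref{bigmnthm} over $K$, the far sharper condition $T(\OO,N)\mid p$. As $p$ is an odd prime while inert primes contribute the even factor $\ell^{2}-1$ to $T$, this eliminates the spurious multiples $3(2p+1)$, $4(2p+1)$, $6(2p+1)$ and the pair $4p+1,\,2(4p+1)$ (for the latter one checks, using $|\Delta|\le 163$, that $2p+1$ cannot be ramified and that $4p+1$ split gives $T=2p\nmid p$); the same dichotomy together with the explicit values of $T^{\circ}(\OO,N)$ from Theorem~\ref{LeastDegN} disposes of $5,8,12$. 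What survives is $q:=2p+1$ prime and split, with $T(\OO,q)=p$, possibly carrying one extra factor of $2$.

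To determine the exponent I would analyze the $2$-torsion over $F$ through $T(\OO,1,2)$ from Theorem~\ref{bigmnthm}(2). The field generated over $K$ by a $2$-torsion point has degree $T(\OO,1,2)$, which is $1$ when $\left(\tfrac{\Delta}{2}\right)\neq-1$ and $3$ when $\left(\tfrac{\Delta}{2}\right)=-1$. In the first case a $2$-torsion point is already defined over $K\subseteq F$, so $E$ carries a point of order $2q$ and the exponent is $2(2p+1)$; in the second case such a point generates a degree-$3$ extension of $K$, which cannot lie in $F$ since $3\nmid[F:K]=p$, so there is no $2$-torsion over $F$ and the exponent is $2p+1$. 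This yields exactly cases (1) and (2). For the converse, Theorem~\ref{bigmnthm}(1) produces from $T(\OO,q)=p$ (respectively $T(\OO,2q)=p$ when $\left(\tfrac{\Delta}{2}\right)\neq-1$) an $\OO$-CM curve over a degree-$p$ extension of $K$, hence over a degree-$2p$ field over $\Q$, with the required point; the $2$-torsion computation shows no unwanted torsion intrudes, so the realized exponent is as claimed.

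The step I expect to be the main obstacle is the $K\not\subseteq F$ case. The complex-conjugation argument must be made uniform over all \emph{composite} candidates, and the full-torsion bound is genuinely needed there: for $N=12$, for instance, neither $E[4]\subseteq E(FK)$ nor $E[3]\subseteq E(FK)$ alone violates the degree bound, and one must combine the two split primes to force $E[12]\subseteq E(FK)$ and contradict $T(\OO,12,12)=8\nmid 2p$; the residual subcases in which the relevant prime is ramified (so no eigenline swap is available, e.g. $N=8,12$ with $2$ or $3$ ramified) have to be closed off separately using the explicit formulas of Theorem~\ref{LeastDegN}. The second delicate point is that the lemma concerns the \emph{exponent} rather than merely the order of an available point; isolating the exact exponent is precisely the $2$-torsion analysis above, and it is what separates case (1) from case (2).
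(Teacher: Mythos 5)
Your proposal is correct and arrives at the same classification, but the engine driving the eliminations is genuinely different from the paper's. The paper never leaves the base field $\Q(j(E))=\Q$: it splits into the cases $\varphi(N)\mid 4$, $\varphi(N)\mid 2p$, and $\varphi(N)=4p$, kills the last with Lemma~\ref{BabyLem} (the equality $\varphi(N)=\omega\cdot T^{\circ}(\OO,N)$ forces every prime-power factor of $N$ to be ramified, which the eleven class-number-one discriminants cannot accommodate for $p>5$), removes $N\in\{5,8,12\}$ by citing the table in the appendix of \cite{BCS}, and settles the $2$-part of the exponent by observing that for $\Delta\in\{-7,-8,-12,-16,-28\}$ a rational $2$-torsion point is present on every model. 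You instead prove and exploit a rationality statement for $K$: an $F$-rational point of order divisible by a prime split in $\OO$ forces $K\subseteq F$ via the complex-conjugation/eigenline argument, upgrading $T(\OO,N)\mid 2p$ to the odd divisibility $T(\OO,N)\mid p$, after which parity of the factors $\widetilde{T}(\OO,\ell_i^{a_i})$ does most of the work and the exponent falls out of $T(\OO,1,2)$ computed over $K$ together with $3\nmid p$. Your eigenline lemma is correct and is in the spirit of the real-cyclotomy results of \cite{BC1}; the paper only ever invokes the weaker full-torsion statement (Lemma 3.15 of \cite{BCS}), and only in its Section 4. What your route buys is a single conceptual reason (an odd degree bound over $K$) for discarding $3(2p+1)$, $4(2p+1)$, $6(2p+1)$, $4p+1$, and $2(4p+1)$; what it costs is having to supply the eigenline argument yourself and to close the residual inert/ramified subcases for $N\in\{5,8,12\}$ by explicit computation with Theorems~\ref{bigmnthm} and~\ref{LeastDegN}, which the paper dispatches more uniformly. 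Two harmless slips: in the $4p+1$ discussion you write that $2p+1$ cannot be ramified where you mean $4p+1$, and in the converse direction the claim that the produced curve has exponent exactly $2p+1$ (resp.\ $2(2p+1)$) should explicitly lean on your earlier elimination of the larger multiples, as you implicitly do.
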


\begin{proof}
Suppose $E(F)_{\tors}\cong \Z/M\Z\times\Z/N\Z$ for $M \mid N$. Note in this case $[F:\Q(j(E))]=[F:\Q] =2p$, which means $\varphi(N) \mid \omega \cdot 2p$ by Corollary \ref{dividethmGen}. Since $\Delta<-4$, it follows that $\omega=2$.
Thus $\varphi(N) \mid 4p$. If $\varphi(N) \mid 4$, then $N \in \{1, 2, 3, 4, 5, 6, 8, 10, 12\}$. If $\varphi(N) \mid 2p$, then the proof of the previous lemma shows $N \in \{1,2,3,4,6,2p+1, 2\cdot (2p+1)\}$ where $2p+1$ is prime. Thus the only remaining case is when $\varphi(N) = 4p$. But in this case Lemma \ref{Lemma3.2} implies $T(\OO,N)=T^{\circ}(\OO,N)=2p$. By Lemma \ref{BabyLem}, if $N \geq 4$, then $N=\prod \ell_i^{a_i}$ where $\ell_i$ is a prime ramified in $\OO$ or $N=2 \cdot \prod \ell_i^{a_i}$ where 2 is split in $\OO$ and $\ell_i$ is an odd prime ramified in $\OO$. As the discriminant of $\OO$ is in \[\{-7, -8, -11, -12, -16, -19,  -27, -28,   -43, -67, -163\},\] there are no possibilities such that $\varphi(N)=4p$ for $p>5$. We note that if $N$ is new, then $N\not \in \{1,2, 3,4,6, 10\}$ by definition. Furthermore, $N \not\in \{5,8,12\}$, for otherwise $4 \mid [F:\Q]$; see, for example, the table in the appendix of \cite{BCS}. 

Now, suppose $N=2p+1$, where $2p+1$ is prime. Since $p>5$, we see immediately from the list of imaginary quadratic discriminants of class number 1 that $N$ is not ramified in $\OO$, and $N$ is not inert, for otherwise $T(\OO,N)=2p(p+1) \nmid [F:\Q]$ by Theorem \ref{bigmnthm}. Now, suppose $N$ is split in $\OO$. Then $T^{\circ}(\OO,N)=2p$ by Theorem \ref{LeastDegN}. By Theorem \ref{bigmnthm}, $N=2 \cdot (2p+1)$ is possible only if 2 is split or ramified in $\OO$. Conversely, suppose $2$ is split or ramified in $\OO$. Then $\Delta \in \{-7, -8, -12, -16, -28\}$. In each case, such an $\OO$-CM elliptic curve $E/F$ will always have an $F$-rational point of order 2; this can be seen, for example, by the fact that any model of such an elliptic curve over $\Q$ will have a rational point of order 2, and points of order 2 are invariant under quadratic twist.
\end{proof}

\begin{lem} \label{Lem3.6}
Let $F$ be a number field of degree $2p$ for $p>5$. Suppose $E/F$ is an $\OO$-CM elliptic curve, where $\Delta(\OO)=-4$. Then $E(F)_{\tors}$ has new exponent $N$ if and only if $N =2\cdot(4p+1)$ where $4p+1$ is prime.
\end{lem}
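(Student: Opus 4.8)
The plan is to follow the template of Lemmas \ref{Lem3.3}--\ref{Lem3.5}, exploiting the two features special to $\Delta=-4$: the order is $\Z[i]$, which has class number $1$ and $\omega=4$, so $\Q(j(E))=\Q$ and $[F:\Q(j(E))]=2p$; and every $\Delta=-4$ curve, being isomorphic over $F$ to a quartic twist $y^2=x^3+ax$, carries the $F$-rational $2$-torsion point $(0,0)$. The second fact forces the exponent $N$ of $E(F)_{\tors}$ to be even, and means that whenever $E$ acquires a point of odd order $m$ it automatically has a point of order $2m$ (this is the analogue of the $2$-torsion argument ending Lemma \ref{Lem3.5}). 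The first fact gives, via Corollary \ref{dividethmGen}, the governing conditions $\varphi(N)\mid \omega\cdot 2p = 8p$ and $T(\OO,N)\mid 2p$.

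First I would enumerate the even $N$ with $\varphi(N)\mid 8p$, splitting on whether $p\mid\varphi(N)$. If $p\nmid\varphi(N)$, then $\varphi(N)\mid 8$ and $N$ lies in the finite set $\{2,4,6,8,10,12,16,20,24,30\}$; the values $\{2,4,6,10\}$ are old, and for each remaining value a direct application of Theorem \ref{bigmnthm} gives $4\mid T(\OO,N)$, so $T(\OO,N)\nmid 2p$ (as $p$ is odd, $2p\equiv 2\pmod 4$) and the value is excluded. If instead $p\mid\varphi(N)$, then $p^2\mid N$ would force $(p-1)\mid 8$, hence $p\le 5$; so $p^2\nmid N$ and the factor of $p$ must come from a prime $\ell\mid N$ with $\ell\equiv 1\pmod p$. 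Writing $\ell-1=kp$ and using $\ell-1\mid 8p$ with $\ell-1$ even and $p$ odd forces $k\in\{2,4,8\}$, i.e. $\ell\in\{2p+1,\,4p+1,\,8p+1\}$.

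Then I would dispatch these three primes, noting that since $p$ is odd they are $\equiv 3,\,1,\,1\pmod 4$ respectively, so $2p+1$ is inert in $\Z[i]$ while $4p+1$ and $8p+1$ split. For $\ell=2p+1$ inert, Theorem \ref{bigmnthm} gives $T(\OO,2p+1)=(\ell^2-1)/4=p(p+1)\nmid 2p$, so no $N$ divisible by $2p+1$ occurs. For $\ell=4p+1$ split one computes $T(\OO,4p+1)=p$ and, since a split prime satisfies no clause of Theorem \ref{LeastDegN}, $T^{\circ}(\OO,4p+1)=2p$; combined with the forced $2$-torsion this produces the candidate $N=2(4p+1)$, while the only other even $N$ allowed by $\varphi(N)\mid 8p$ and divisible by $4p+1$, namely $4(4p+1)$ and $6(4p+1)$, have $T(\OO,N)=4p$ and $8p$ and are excluded by $T(\OO,N)\nmid 2p$. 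The delicate case, which I expect to be the \emph{main obstacle}, is $\ell=8p+1$: here $N=2(8p+1)$ satisfies $T(\OO,2(8p+1))=2p\mid 2p$, so the coarse necessary condition of Corollary \ref{dividethmGen} fails to exclude it. The resolution is the refinement in Theorem \ref{LeastDegN}: because $8p+1$ is split, $T^{\circ}(\OO,8p+1)=2\,T(\OO,8p+1)$, whence $T^{\circ}(\OO,2(8p+1))=2\cdot 2p=4p$. Since $T^{\circ}$ is by definition the least degree over $\Q(j(E))=\Q$ of a field realizing such a point, and $2p<4p$, no $\OO$-CM curve over a degree-$2p$ field can carry a point of order $2(8p+1)$. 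Distinguishing $4p+1$ (which works) from $8p+1$ (which does not) is exactly what the $T^{\circ}$-versus-$T$ analysis is for.

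Finally, for the converse direction I would record the existence statement: when $4p+1$ is prime, $T^{\circ}(\OO,4p+1)=2p$ yields an $\OO$-CM curve $E$ over a degree-$2p$ field $F$ with a point of order $4p+1$; adjoining the automatic rational $2$-torsion gives $\Z/2(4p+1)\Z\hookrightarrow E(F)$, and by the exclusions above the exponent is exactly $2(4p+1)$. As $2(4p+1)>10$ exceeds every exponent occurring in degree $2$, this subgroup is genuinely new, completing both directions of the equivalence.
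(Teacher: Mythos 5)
Your proof is correct and follows essentially the same route as the paper: bound $\varphi(N)\mid 8p$ via Corollary \ref{dividethmGen}, enumerate candidates, eliminate them with the $T$ and $T^{\circ}$ formulas of Theorems \ref{bigmnthm} and \ref{LeastDegN}, and use the automatic $2$-torsion on $y^2=x^3+ax$ for both the exclusion of odd exponents and the existence direction. The only cosmetic difference is that the paper disposes of the $\varphi(N)=8p$ case (your $N=2(8p+1)$) by citing Lemma \ref{BabyLem}, whereas you re-derive the same conclusion directly from the split-prime clause of Theorem \ref{LeastDegN}; the underlying mechanism is identical.
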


\begin{proof}
Suppose $E(F)_{\tors}\cong \Z/M\Z\times\Z/N\Z$ for $M \mid N$. If $\Delta=-4$, then $\omega=4$ and $\varphi(N) \mid 8p$ by Corollary \ref{dividethmGen}. If $\varphi(N) \mid 8$, then \[N \in \{1, 2, 3, 4, 5, 6, 8, 10, 12, 15, 16, 20, 24, 30\}.\] If $\varphi(N) \mid 4p$, then $\ord_2(\varphi(N)) \leq \ord_2(4p)=2$ implies $N=2^a\cdot q_1^b \cdot q_2^c$ where $q_1,q_2$ are odd primes and $a \leq 3$. If $a=3$, then $N=8$, so suppose $a=2$. Then $N=2^2 \cdot q_1^{b}$. If $b>0$, then the assumption that $\varphi(N)=2\cdot q_1^{b-1}(q_1-1) \mid 4p$ implies $q_1=3$ or $2p+1$ as above, and $b=1$. If $a \leq1$, then $N=2^a\cdot q_1^b \cdot q_2^c$, and we have
\[
\varphi(N)=q_1^{b-1}(q_1-1) \cdot q_2^{c-1}(q_2-1) \mid 4p.
\]
In particular, $q_i-1 \mid 4p$ implies $q_i \in \{3,5,2p+1, 4p+1\}$, since it is an odd prime. Thus if $\varphi(N) \mid 4p$, the only possibilities are 
\[N \in \{1, 2, 3, 4, 5, 6, 8, 10, 12,2p+1, 2\cdot(2p+1),3\cdot(2p+1), 4 \cdot(2p+1),6 \cdot(2p+1), 4p+1, 2\cdot(4p+1)\},\] where $2p+1$ and $4p+1$ can arise only if they are prime. Finally, suppose $\varphi(N)=8p$. But Lemma \ref{Lemma3.2} implies $2p=T(\OO,N)=T^{\circ}(\OO,N)$. By Lemma \ref{BabyLem}, $N \in\{1,3,2^a\}$ since $\Delta=-4$, but none of these satisfy $\varphi(N)=8p$.

We note that if $N$ is a new exponent, then $N\not\in \{1,2, 3,4,6, 10\}$ by definition, so we may remove these values from consideration. By Theorem \ref{bigmnthm}, $T(\OO,N) \nmid 2p$ if $N \in \{8,12, 15, 20\}$, which implies $N$ cannot be any of these values, along with 16, 24, or 30. Though there can exist a point of order 5 on an $\OO$-CM elliptic curve defined over a number field $F$ of degree $2p$, such an elliptic curve corresponds to an equation of the form $y^2=x^3+Ax$ and so has an $F$-rational point of order 2. Thus an exponent of 5 is not possible. Now, consider a prime $N=2p+1$, which cannot be ramified since $\Delta=-4$. If $N$ is inert, then $T(\OO,N)=p(p+1) \nmid 2p$. In addition, $N$ cannot be split, since then $T(\OO,N)=p/2$ would not be an integer. Thus if $E(F)_{\tors}$ has new exponent $N$, then $N \in \{4p+1, 2\cdot(4p+1)\}$ where $4p+1$ is prime. Since $\left( \frac{-4}{4p+1} \right)=1$, there is a point of order $4p+1$ in degree $2p$ by Theorem \ref{LeastDegN} and Theorem \ref{bigmnthm}. As $E$ has the form $y^2=x^3+Ax$, there is a point of order 2 as well, so $N=2 \cdot (4p+1)$.
\end{proof}

\begin{lem}
Let $F$ be a number field of degree $2p$ for $p>5$. Suppose $E/F$ is an $\OO$-CM elliptic curve, where $\Delta(\OO)=-3$. Then $E(F)_{\tors}$ has new exponent $N$ if and only if we are in one of the following cases:
\begin{enumerate}
\item $N=49$ and $p=7$.
\item $N =6p+1$ where $6p+1$ is prime.
\end{enumerate}
\end{lem}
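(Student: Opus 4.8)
The plan is to run the same template as the preceding lemmas, now with $\omega = 6$ and $[F:\Q(j(E))]=[F:\Q]=2p$ (since $h(\OO)=1$ for $\Delta=-3$). Corollary \ref{dividethmGen} then supplies the two governing constraints $\varphi(N)\mid 12p$ and $T(\OO,N)\mid 2p$. Throughout I would use that for $\Delta=-3$ a rational prime $\ell$ is ramified in $\OO$ exactly when $\ell=3$, is split exactly when $\ell\equiv 1\pmod 3$, and is inert exactly when $\ell\equiv 2 \pmod 3$, so that the values $\widetilde T(\OO,\ell^{a})$ of Theorem \ref{bigmnthm} can be read off case by case.

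First I would enumerate the $N$ allowed by $\varphi(N)\mid 12p$. Writing $N=2^{a}3^{b}\prod q_i^{c_i}$, either $p\nmid \varphi(N)$, in which case $\varphi(N)\mid 12$ and $N$ lies in an explicit finite list, or $p\mid\varphi(N)$, in which case $N$ is divisible by a prime $q\equiv 1\pmod p$ (forcing $q\in\{2p+1,4p+1,6p+1,12p+1\}$ when prime) or else $p^2\mid N$. For the finite list I would discard the values $\{1,2,3,4,6,7,10\}$ that already occur in degree $2$ by \cite{tor2}, and then kill the remaining candidates $\{5,8,9,12,14,18,26,28,36\}$ directly by computing $T(\OO,N)$ and observing $T(\OO,N)\nmid 2p$.

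For the candidates carrying a factor of $p$ I would combine a congruence argument with the formulas of Theorem \ref{bigmnthm}. A short computation modulo $3$ shows that if $2p+1$ or $4p+1$ is prime then it is inert in $\OO$, so its $\widetilde T$-value has size $\ell^2-1$ and $T(\OO,N)\nmid 2p$; these are excluded. By contrast $6p+1\equiv 1\pmod 3$ always, so when prime it is split, giving $T(\OO,6p+1)=p$ and $T^\circ(\OO,6p+1)=2p$ by Theorem \ref{LeastDegN}; this is case (2). When $p^2\mid N$, the bound $\varphi(N)\mid 12p$ forces $p-1\mid 12$, i.e. $p\in\{7,13\}$, and a direct check leaves only $N=49$ with $p=7$, where $T(\OO,49)=7$ and $T^\circ(\OO,49)=14=2p$; this is case (1). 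The extremal possibility $\varphi(N)=12p$ I would dispatch with Lemma \ref{BabyLem}: it would force every prime dividing $N$ to be ramified in $\OO$ (or $2$ split with exponent $1$), but only $3$ ramifies for $\Delta=-3$, and no power of $3$ times a small factor has $\varphi$ equal to $12p$.

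The main obstacle is the family of borderline candidates for which the basic divisibility $T(\OO,N)\mid 2p$ holds yet no point of order $N$ can actually live in degree $2p$; these are exactly the $N$ with a split-prime factor, such as $N=13$, $N=21$, $N=3(6p+1)$, $N=12p+1$, and $N=169$ when $p=13$. For each one has $T(\OO,N)\in\{2,2p\}$ but, because a split prime contributes the factor-of-two in Theorem \ref{LeastDegN}, the true least degree is $T^\circ(\OO,N)=2\,T(\OO,N)\in\{4,4p\}$, which does not divide $2p$. I would therefore phrase the decisive criterion as: a new exponent requires $T^\circ(\OO,N)=2p$ exactly, since any $N$ with $T^\circ(\OO,N)\in\{1,2\}$ already occurs in degree at most $2$, any $N$ with $T^\circ(\OO,N)=p$ occurs in degree $p$ and hence in degree $2$ by \cite{BCS}, and $T^\circ(\OO,N)\nmid 2p$ precludes occurrence in degree $2p$ altogether by Theorem \ref{LeastDegN} together with the results of \cite{BC2}. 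Checking $T^\circ(\OO,N)=2p$ against the surviving list isolates precisely $N=49$ (with $p=7$) and $N=6p+1$. Finally, for the converse I would realize both cases by taking $F=\Q(j(E),\mathfrak h(P))$ for a point $P$ of order $N$: since $T^\circ(\OO,N)\cdot h(\OO)=2p$, this field has degree exactly $2p$ and carries an $F$-rational point of order $N$, which is new because its least degree of occurrence is $2p>2$.
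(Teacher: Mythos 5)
Your overall skeleton matches the paper's: enumerate the candidates for $N$ from $\varphi(N)\mid 12p$, kill most of them by computing $T(\OO,N)$ and checking $T(\OO,N)\nmid 2p$, dispatch the extremal case $\varphi(N)=12p$ with Lemma \ref{BabyLem}, and realize $N=49$ and $N=6p+1$ via Theorem \ref{LeastDegN}. The congruence-mod-$3$ observations (that $2p+1$ and $4p+1$, when prime, are inert while $6p+1$ is split) are correct and equivalent to what the paper does.

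However, there is a genuine gap in your treatment of the ``borderline'' candidates, most importantly $N=13$ and $N=21$. Your decisive criterion is that $T^\circ(\OO,N)\nmid 2p$ ``precludes occurrence in degree $2p$ altogether by Theorem \ref{LeastDegN} together with the results of \cite{BC2}.'' No such divisibility is available: Corollary \ref{dividethmGen} gives only $T(\OO,N)\mid [F:\Q(j(E))]$, and Theorem \ref{LeastDegN} identifies $T^\circ(\OO,N)$ only as the \emph{least} degree of occurrence, not as a quantity dividing every degree of occurrence. For $N=13$ and $N=21$ one has $T(\OO,N)=2\mid 2p$ while $T^\circ(\OO,N)=4$, so these candidates survive every divisibility test you have at your disposal, and asserting $T^\circ\mid 2p$ is precisely the point that needs proof. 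The paper closes this gap with a separate argument: by Lemma 7.6, Proposition 7.7, and Theorem 7.8 of \cite{BC1}, the Weber function field satisfies $[K(\mathfrak h(P)):K]\in\{2,24\}$ (resp.\ the analogous list for $N=21$), the constraint $[F:\Q]=2p$ forces $[\Q(\mathfrak h(P)):\Q]=2$, and then a twisting argument produces a rational point of order $N$ in degree $2$, contradicting $T^\circ(\OO,N)=4$. Some of your other borderline cases ($N=3(6p+1)$, $12p+1$, and $169$ with $p=13$) are actually already covered by your own $\varphi(N)=12p$ / Lemma \ref{BabyLem} step, since each has $\varphi(N)=12p$; but $N=13$ and $N=21$ are not, and for them your argument as written does not go through.
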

\begin{proof}
Suppose $E(F)_{\tors}\cong \Z/M\Z\times\Z/N\Z$ for $M \mid N$. If $\Delta=-3$, then $\omega=6$ and $\varphi(N) \mid 12p$ by Corollary \ref{dividethmGen}. If $\varphi(N) \mid 12$, then 
\[
N \in \{1, 2, 3, 4, 5, 6, 7, 8, 9, 10, 12, 13, 14, 18, 21, 26, 28, 36, 42 \}.
\]
If $\varphi(N) \mid 6p$, then $\ord_2(\varphi(N)) \leq \ord_2(6p)=1$ implies $N=2^a\cdot q^b$ for an odd prime $q$ and $a \leq 2$. Suppose $b>0$. Then $a \leq 1$ and $q-1 \mid 6p$ implies $q \in \{3,7, 2p+1, 6p+1\}$ since $q$ is an odd prime. If $\varphi(N) \mid 4p$, then as shown in the proof of Lemma \ref{Lem3.6},
\[N \in \{1, 2, 3, 4, 5, 6, 8, 10, 12,2p+1, 2\cdot(2p+1),3\cdot(2p+1), 4 \cdot(2p+1),6 \cdot(2p+1), 4p+1, 2\cdot(4p+1)\},\] where $2p+1$ and $4p+1$ can arise only if they are prime. 
Finally, suppose $\varphi(N)=12p$. But then Lemma \ref{Lemma3.2} implies $T(\OO,N)=T^{\circ}(\OO,N)=2p$. Since $\Delta=-3$, Lemma \ref{BabyLem} implies $N \in \{1,2,3^a\}$, but none of these satisfy $\varphi(N)=12p$.

If $N$ is a new exponent, then $N\not\in \{1,2, 3,4,6, 7, 10\}$ by definition. By Theorem \ref{bigmnthm}, $N\not\in\{5, 8, 9, 12, 14, 18, 26, 28, 36, 42, 98\}$ since $T(\OO,N) \nmid 2p$. Next, we will show $2p+1 \nmid N$ when $2p+1$ is prime. Since $\Delta =-3$, $2p+1$ is not ramified, and it cannot be split because then $T(\OO,2p+1)\notin \Z$. Thus $2p+1$ is inert in $\OO$, and $T(\OO,N)>2p$; contradiction. Similarly, we cannot have $4p+1 \mid N$ when $4p+1$ is prime.

The remaining options are $N \in \{13, 21,  49, 6p+1, 2 \cdot (6p+1)\}$ where $6p+1$ is prime. To see $N \neq 13$, note that by Lemma 7.6 and Theorem 7.8 in \cite{BC1}, if $P \in E$ has order 13 and $K=\Q(\sqrt{-3})$, then $[K(\mathfrak{h}(P)):K]=2$ or 24, where $\mathfrak{h}$ denotes a Weber function on $E$. Since $P$ is defined over a number field of degree $2p$, it must be that $[K(\mathfrak{h}(P)):K]=2$. Then $[\Q(\mathfrak{h}(P)):\Q]=2$, since its degree must also divide $2p$. However, then there is a twist of $E$ defined over $\Q(\mathfrak{h}(P))$ such that $P$ becomes rational, and $T(\OO,13)=T^{\circ}(\OO,13)=2$. This contradicts Theorem \ref{LeastDegN}. Similarly, $N \neq 21$: by Lemma 7.6, Proposition 7.7, and Theorem 7.8 in \cite{BC1}, $[K(\mathfrak{h}(P)):K]=[\Q(\mathfrak{h}(P)):\Q]=2$ since this quantity must divide $2p$. But then $T(\OO,21)=T^{\circ}(\OO,21)=2$, which contradicts Theorem \ref{LeastDegN}. 

We note $N=49$ does occur as a new exponent in degree $2\cdot 7$ by Theorem \ref{LeastDegN}, and this is the only possible degree since $\varphi(49) \mid 12p$ only if $p=7$. If $N=6p+1$ is prime, then $\left( \frac{-3}{6p+1} \right)=1$, and there exists a point of order $N$ in degree $2p$ by Theorem \ref{LeastDegN}. However, 
$T(\OO,2 \cdot (6p+1))=3p,
$ 
and so we cannot have an $\OO$-CM elliptic curve with a point of order $2 \cdot (6p+1)$ in degree $2p$.
\end{proof}

\section{Determining New Torsion Subgroups}

Suppose $F$ is a number field of degree $2p$, where $p>5$ is prime, and $E/F$ is an $\OO$-CM elliptic curve. If $E(F)_{\tors} \cong \Z/M\Z \times \Z/N\Z$ for $M \mid N$ is new, then either $N$ occurs already as an exponent of a CM torsion subgroup in degree 1 or 2 and
\[
N \in \{1,2,3, 4,6,7, 10\}
\]
by \cite[$\S4.1, 4.2$]{tor2}, or else by the previous section we are in one of the following cases:
\begin{enumerate}
\item $\Delta(\OO)=-115$, $p=11$, and $N=23$,
\item $\Delta(\OO)=-235$, $p=23$, and $N=47$,
\item $\Delta(\OO) \in\{-11,-19,-27,-43,-67,-163\}$ and $N=2p+1$ is prime with $\left( \frac{\Delta}{2p+1} \right) = 1$,
\item $\Delta(\OO) \in \{-7,-8, -12, -16, -28\}$ and $N=2 \cdot(2p+1)$ where $2p+1$ is prime with $\left( \frac{\Delta}{2p+1} \right) = 1$,
\item $\Delta(\OO)=-4$ and $N=2 \cdot (4p+1)$ where $4p+1$ is prime,
\item $\Delta(\OO)=-3$, $p=7$, and $N=49$,
\item $\Delta(\OO)=-3$ and $N=6p+1$ where $6p+1$ is prime.
\end{enumerate}

\begin{lem}
Suppose $F$ is a number field of degree $2p$, where $p>5$ is prime, and $E/F$ is an $\OO$-CM elliptic curve. If $E(F)_{\tors} \cong \Z/M\Z \times \Z/N\Z$ for $M \mid N$ is new, then $M=1$ or 2.
\end{lem}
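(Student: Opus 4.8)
The plan is to assume $M \geq 3$ and reach a contradiction. Since $M \mid N$, we have $\Z/M\Z \times \Z/M\Z \hookrightarrow E(F)$, so the Weil pairing forces $\Q(\zeta_M) \subseteq F$ and hence $\varphi(M) \mid [F:\Q] = 2p$. As $\varphi(M)$ is even for $M \geq 3$ and the even divisors of $2p$ are $2$ and $2p$, this leaves $\varphi(M) = 2$, so $M \in \{3,4,6\}$, or $\varphi(M) = 2p$, so $M \in \{2p+1, 2(2p+1)\}$ with $2p+1$ prime. Intersecting with $M \mid N$ and the list of admissible exponents recalled at the start of this section discards every other possibility (for instance $M \in \{5,7,10,49,4p+1,6p+1\}$ are killed outright, since their totients do not divide $2p$), and leaves only the groups built on full $(2p+1)$-torsion---arising when $N \in \{2p+1, 2(2p+1)\}$, including $N = 23, 47$ for $\Delta \in \{-115,-235\}$---together with the four small groups $\Z/3\Z \times \Z/3\Z$, $\Z/4\Z\times\Z/4\Z$, $\Z/6\Z\times\Z/6\Z$, and $\Z/3\Z\times\Z/6\Z$.

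For the first family, write $q = 2p+1$; each such group contains $\Z/q\Z \times \Z/q\Z$, so it is enough to bound $T(\OO,q,q)$ from below. Recall from the exponent analysis that $q$ is split or ramified in $\OO$. Applying Theorem \ref{bigmnthm}(6) together with (4) gives $T(\OO,q,q) = (q-1)^2/\omega = 2p^2$ when $q$ splits and $T(\OO,q,q) = q(q-1)/\omega = q(q-1)/2$ when $q$ ramifies (the $\Delta = -115, -235$ cases); in either case this exceeds $[F:\Q(j(E))] = 2p/h(\OO) \leq 2p$, contradicting the divisibility $T(\OO,q,q)\mid[F:\Q(j(E))]$ supplied by Theorem \ref{bigmnthm} and the field diagram. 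This eliminates every group with $M = q$ or $M = 2q$.

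For the small groups I would argue differently, since here $\varphi(M) = 2$ and the degree bound is far too weak. The group $\Z/3\Z \times \Z/3\Z$ already occurs over a quadratic field by \cite{tor2}, so it is not new. The remaining three do not appear in the degree-$2$ classification, and since each contains $\Q(\zeta_4)$ or $\Q(\zeta_3)$ none can be realized in the odd degree $p$; hence a degree-$2p$ realization would automatically be new, and I must instead show that no such realization exists. For most orders this is immediate from Theorem \ref{bigmnthm}: computing $T(\OO,M,N)$ shows it carries a factor of $3$ or $4$ (for instance when the relevant prime is inert or ramified), so $T(\OO,M,N) \nmid 2p/h(\OO)$ and the group cannot occur.

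The main obstacle is the residual configuration in which $\Q(\zeta_M) \neq K$ but the prime dividing $M$ splits in $\OO$---for example $\Z/3\Z\times\Z/6\Z$ with $\Delta = -8$, or $\Z/4\Z\times\Z/4\Z$ with $\Delta = -343$ and $p = 7$---where one finds $T(\OO,M,N) = 2 \mid 2p$, so the coarse condition is satisfied and does not suffice. Here I would pass to the finer invariant $T^{\circ}(\OO,M,N) = 2^{\epsilon}T(\OO,M,N)$ of \cite{BC2} and show that realizing the full $M$-torsion (rather than a single point) forces $K$ into the base field, so that $\epsilon = 1$ and the true minimal degree $T^{\circ}(\OO,M,N)\cdot h(\OO)$ is a multiple of $4$. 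As $4 \nmid 2p$, no such group occurs in degree $2p$. Combining the three families shows that a new subgroup must have $M \in \{1,2\}$.
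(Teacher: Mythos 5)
Your overall strategy is sound and, at its core, coincides with the paper's: both arguments come down to (a) showing $T(\OO,q,q)>2p$ for the large prime $q=\omega p+1$, which kills every group containing $\Z/q\Z\times\Z/q\Z$, and (b) using the fact that full $M$-torsion for $M\geq 3$ puts $K$ inside $F$, which upgrades the coarse divisibility $T(\OO,M,N)\mid [F:\Q(j(E))]$ to $2\,T(\OO,M,N)\mid [F:\Q]$ and disposes of the small groups. Your opening reduction via the Weil pairing ($\Q(\zeta_M)\subseteq F$, hence $\varphi(M)\mid 2p$) is a clean alternative to the paper's, which instead intersects $M\mid N$ against the Section~3 exponent list to get $M\in\{3,4,5,6,7,10,49\}$ and then eliminates $\ell=\omega p+1$ by the same $T(\OO,\ell,\ell)$ computation you give; the two routes are interchangeable, yours trimming $5,7,10,49$ slightly earlier. (Minor slip: Theorem \ref{bigmnthm} has no part (6); you mean the product formula in part (5).)

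Two points need tightening before this is a proof rather than a plan. First, the assertion that realizing full $M$-torsion ``forces $K$ into the base field'' is precisely Lemma 3.15 of \cite{BCS} and is the crux of the small-group case; you state it as something you ``would show'' rather than proving or citing it, and without it the residual configurations you correctly identify (e.g.\ $(M,N)=(4,4)$ with $\Delta=-343$ and $h(\OO)=p=7$, where $T(\OO,4,4)=2=[F:\Q(j(E))]$) are not eliminated. Second, your closing inference --- ``the true minimal degree $T^{\circ}(\OO,M,N)\cdot h(\OO)$ is a multiple of $4$; as $4\nmid 2p$, no such group occurs in degree $2p$'' --- is a non sequitur as written, since a structure can occur in degrees that exceed but are not multiples of its minimal degree. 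What you actually need, and what $K\subseteq F$ delivers, is that \emph{every} realizing degree satisfies $2\,h(\OO)\,T(\OO,M,N)\mid [F:\Q]$, because then $K(j(E))\subseteq F$, $[K(j(E)):\Q]=2h(\OO)$, and Theorem \ref{bigmnthm}(1) gives $T(\OO,M,N)\mid[F:K(j(E))]$. With that divisibility in hand (and the case check for $(3,6)$, $(4,4)$, $(6,6)$ actually carried out, which you only sketch ``for most orders''), the argument closes exactly as in the paper.
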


\begin{proof} Suppose $\ell \mid M$ is prime. If $\ell=\omega \cdot p+1 >4$, then by Theorem \ref{bigmnthm} we have
$
T(\OO,\ell,\ell)>2p.
$
This is a contradiction. By $\S3$ as summarized above, it remains to consider 
\[
M \in \{3,4,5,6, 7, 10, 49\}.
\]

Note that for any $M \geq 3$, the CM field $K$ is contained in $F(E[M])$ by Lemma 3.15 of \cite{BCS}, and so $2 \cdot T(\OO,M,N) \mid [F:\Q]$ by Theorem \ref{bigmnthm}. We reach a contradiction for 
\[
(M,N) \in \{ (3,6), (4,4),(5,10), (6,6),  (7,7), (7,49), (10,10), (49,49)\}.
\]
This leaves only $(M,N)=(3,3)$, but $\Z/3\Z \times \Z/3\Z$ occurs already in degree 2 by \cite[$\S4.2$]{tor2}.
\end{proof}

By the classification of CM torsion subgroups in degree 2 \cite[$\S4.2$]{tor2} and the previous lemma, the only possible new subgroup with an old exponent is $\Z/2 \Z \times \Z/10\Z$. Any other new torsion subgroup will be of the form $\Z/N\Z$ or $\Z/2\Z \times \Z/N\Z$ for a new exponent $N$. In particular, if $N$ is odd, then the new torsion subgroup is precisely $\Z/N\Z$. It remains to check whether one can have full 2-torsion in each of the following cases:
\begin{enumerate}
\item $N=10$
\item $N=2 \cdot(2p+1)$ where $2p+1$ is prime, $\Delta(\OO) \in \{-7,-8, -12, -16, -28\}$ and $\left( \frac{\Delta}{2p+1} \right) = 1$
\item $N=2 \cdot (4p+1)$ where $4p+1$ is prime and $\Delta(\OO)=-4$
\end{enumerate}

\begin{lem}
$\Z/2 \Z \times \Z/10 \Z$ does not occur as a new torsion subgroup of a CM elliptic curve defined over a number field of degree $2p$ for $p>5$.
\end{lem}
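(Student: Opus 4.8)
The plan is to derive a contradiction from the assumption $E(F)_{\tors}\cong\Z/2\Z\times\Z/10\Z$ by exploiting that this group simultaneously contains full two-torsion and a point of order $5$ (for instance the order-$5$ element of $\Z/10\Z$). The first move is to observe that the exponent analysis of $\S 3$ already dictates when an $F$-rational point of order divisible by $5$ can exist over a degree-$2p$ field. When $h(\OO)=2p$ we have $F=\Q(j(E))$, so such a point would force $T(\OO,5)\mid[F:\Q(j(E))]=1$, yet $T(\OO,5)=\widetilde{T}(\OO,1,5)/\omega\geq 4/2=2$; when $h(\OO)=p$ and $h(\OO)=2$ the exponent lists obtained in Lemma \ref{Lem3.3} and in the class number two lemma (namely $\{1,2,3,4,6\}$ and $\{1,2,3,4,6,2p+1,2(2p+1)\}$, with $2p+1>5$ for $p>5$) contain no multiple of $5$; and Lemma \ref{Lem3.6} together with the $\Delta=-3$ lemma explicitly rule out order $5$ when $\Delta\in\{-3,-4\}$. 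Hence the only surviving regime is $h(\OO)=1$ with $\Delta<-4$, where $\omega=2$ and $[F:\Q(j(E))]=2p$.

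In that regime I would next use the order-$5$ point to pin down $\Delta$. Since $T(\OO,5)=\widetilde{T}(\OO,1,5)/\omega\mid 2p$ by Corollary \ref{dividethmGen}, and $\widetilde{T}(\OO,1,5)$ equals $24$ when $5$ is inert and $4$ when $5$ is split or ramified, the inert case gives $T(\OO,5)=12\nmid 2p$. Thus $5$ must be split or ramified; as no class number one discriminant below $-4$ is divisible by $5$, it must split, leaving only $\Delta\in\{-11,-16,-19\}$.

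Finally I would test the full embedding $\Z/2\Z\times\Z/10\Z\hookrightarrow E(F)$ through the combined invariant $T(\OO,2,10)$, which must divide $[F:\Q(j(E))]=2p$ by Theorem \ref{bigmnthm} (via the field diagram in $\S 2$). Writing $10=2\cdot 5\geq 4$ and applying Theorem \ref{bigmnthm} with $\widetilde{T}(\OO,2,2)=2-\left(\frac{\Delta}{2}\right)$ and $\widetilde{T}(\OO,1,5)=4$,
\[
T(\OO,2,10)=\frac{\widetilde{T}(\OO,2,2)\,\widetilde{T}(\OO,1,5)}{\omega}=2\left(2-\left(\tfrac{\Delta}{2}\right)\right),
\]
which equals $4$ for $\Delta=-16$ (where $2$ ramifies, so $\left(\frac{\Delta}{2}\right)=0$) and $6$ for $\Delta\in\{-11,-19\}$ (where $2$ is inert, so $\left(\frac{\Delta}{2}\right)=-1$). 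Neither $4$ nor $6$ divides $2p$ for $p>5$, the desired contradiction. The main obstacle is precisely this last regime $h(\OO)=1$, $\Delta<-4$: here the exponent $10$ \emph{does} occur, so both $\Z/10\Z$ and full two-torsion appear separately over degree-$2p$ fields, and the argument must show that requiring them \emph{together} makes $T(\OO,2,10)$ absorb an extra factor of $2$ from the two-torsion contribution $\widetilde{T}(\OO,2,2)$ that $2p$ cannot accommodate; everything preceding it is bookkeeping against the lemmas of $\S 3$.
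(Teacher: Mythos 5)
Your reduction contains a genuine gap, and it occurs at exactly the point where the real work lies. You dismiss $\Delta=-4$ by asserting that Lemma \ref{Lem3.6} ``explicitly rules out order $5$'' for that discriminant, but that lemma says the opposite: its proof notes that a point of order $5$ \emph{can} exist on a $\Delta=-4$ curve over a degree-$2p$ field (indeed $5$ splits in $\Z[i]$, $\widetilde{T}(\OO,1,5)=4$ and $\omega=4$ give $T(\OO,5)=1$, and $\Z/10\Z$ already occurs in degree $2$). What Lemma \ref{Lem3.6} rules out is that $5$ or $10$ is a \emph{new exponent}; since the entire purpose of the present lemma is that the exponent $10$ is old while the group $\Z/2\Z\times\Z/10\Z$ might nonetheless be new, $\Delta=-4$ is precisely the surviving, substantive case. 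Moreover, your chosen invariant cannot close it: for $\Delta=-4$ one has $\widetilde{T}(\OO,2,2)=2-\left(\frac{-4}{2}\right)=2$ and hence $T(\OO,2,10)=2\cdot 4/4=2$, which \emph{does} divide $[F:\Q(j(E))]=2p$, so the divisibility over $\Q(j(E))$ yields no contradiction. The paper's proof supplies the missing ingredient: an order-$10$ point $P$ forces $\Q(\mathfrak{h}(P))$ to be the degree-$2$ field $K=\Q(\sqrt{-1})$, hence $K\subseteq F$, and then $T(\OO,2,10)=2$ must divide $[F:K]=p$, which is the contradiction. Some input of this kind (the Weber function field, or equivalently control of $T^{\circ}(\OO,2,10)$ rather than $T(\OO,2,10)$) is indispensable; without it your argument does not exclude $\Z/2\Z\times\Z/10\Z$ for $j=1728$.

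The rest of your case analysis is sound and in places more self-contained than the paper's. The eliminations for $h(\OO)\in\{2p,p,2\}$ and for $\Delta=-3$ agree with the paper's first step (which gets $h(\OO)=1$ from $2\mid T(\OO,10)$ for $\Delta\neq -4$ together with Lemma \ref{Lem3.3}). For $h(\OO)=1$ with $\Delta<-4$ you correctly isolate $\Delta\in\{-11,-16,-19\}$ as the discriminants where $5$ splits, and your computation $T(\OO,2,10)=2\bigl(2-\left(\frac{\Delta}{2}\right)\bigr)\in\{4,6\}$, neither dividing $2p$ for $p>5$, is a clean replacement for the paper's appeal to the table in the appendix of \cite{BCS}. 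But as written the proof is incomplete until the $\Delta=-4$ case is treated.
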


\begin{proof}
Suppose $E/F$ is an $\OO$-CM elliptic curve with $E(F)_{\tors} \cong \Z/2 \Z \times \Z/10 \Z$, and let $P \in E(F)$ be a point of order 10. By Corollary \ref{dividethm}, we have $2 \mid T(\OO,10)$ unless $\Delta =-4$. Thus $2 \nmid h(\OO)$, for otherwise $4 \mid [F:\Q]$ by Corollary \ref{dividethmGen}.  In addition, $h(\OO) \neq p$ by Lemma \ref{Lem3.3}. Since $h(\OO) \mid 2p$, it follows that $h(\OO)=1$. Moreover, $\Delta(\OO)=-4$ by the table in the appendix of \cite{BCS} since otherwise $[\Q(\mathfrak{h}(P)):\Q] \nmid 2p$. Also, this table shows that $\Q(\mathfrak{h}(P))$ has degree 2, as neither 4 nor 8 divide $2p$. Since $T(\OO,10)=1$ by Theorem \ref{bigmnthm}, it follows that $K=\Q(\sqrt{-1})=\Q(\mathfrak{h}(P))$. In particular, $K \subseteq F$. Moreover,
$
T(\OO,2,10)=2.
$
Since $T(\OO,2,10) \mid [F:K]$, it follows that $4 \mid [F:\Q]$; contradiction.
\end{proof}

\begin{lem}
$\Z/2 \Z \times \Z/2(2p+1) \Z$ where $2p+1$ is prime and $p>5$ is prime occurs as a new torsion subgroup of an $\OO$-CM elliptic curve in degree $2p$ if and only if $\Delta(\OO)=-7$ and $\left( \frac{\Delta}{2p+1} \right) = 1$.

\end{lem}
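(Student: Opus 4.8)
The plan is to treat the ``$\Longleftarrow$'' and ``$\Longrightarrow$'' directions separately, working within the five discriminants $\Delta\in\{-7,-8,-12,-16,-28\}$ left by the preceding discussion, with $q:=2p+1$ prime and $\left(\frac{\Delta}{q}\right)=1$ so that $q$ splits in $\OO$. The whole argument reduces to a single structural fact: any field $F$ of degree $2p$ carrying the group $\Z/2\Z\times\Z/2q\Z$ on an $\OO$-CM curve must contain $K$. Once that is in hand, the four ramified cases die instantly and only $\Delta=-7$ survives.

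First I would pin down $F$. If $\Z/2\Z\times\Z/2q\Z\hookrightarrow E(F)$, then $E(F)$ has a point $P$ of order $q$ and, since $\Delta<-4$, the Weber function is $\mathfrak h=x$, so $x(P)\in F$. Because $q$ is an odd split prime, none of the conditions of Theorem~\ref{LeastDegN} hold, whence $T^{\circ}(\OO,q)=2T(\OO,q)=2p$; as a twist of $E$ acquires a rational point of order $q$ over $\Q(j(E),x(P))=\Q(x(P))$, this forces $[\Q(x(P)):\Q]\geq 2p$, so $F=\Q(x(P))$ and $FK=K(x(P))$.

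The crucial step is to show $K\subseteq F$. Here I would first argue that $P$ must lie on one of the two $K$-rational eigenlines $E[\mathfrak q]$, $E[\bar{\mathfrak q}]$ of $E[q]\cong\F_q\oplus\F_q$. Indeed, writing $P=(a,b)$ in an eigenbasis, if $a,b\neq 0$ then $\Gal(\overline{\Q}/FK)$ (which fixes $P$) would have to act trivially through both CM characters, giving $E[q]\subseteq E(FK)$ and hence $T(\OO,q,q)=2p^2\mid [FK:K]\leq 2p$, a contradiction. For an eigenline point the action of $\Gal(\overline{\Q}/K)$ on $E[\mathfrak q]$ runs through a character $\chi\colon \Gal(\overline{\Q}/K)\to\F_q^\times$, and $[K(x(P)):K]=|\im\chi|/|\im\chi\cap\{\pm1\}|$. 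This is exactly where the Sophie Germain structure is used: since $p$ is odd, $q=2p+1\equiv 3\pmod 4$, so $-1$ is a nonsquare modulo $q$; together with $[K(x(P)):K]\geq T(\OO,q)=p$ this leaves only $|\im\chi|\in\{p,2p\}$, and both yield $[K(x(P)):K]=p$. Thus $[FK:K]=p$, which is incompatible with $K\not\subseteq F$ (that would give $[FK:K]=[F:\Q]=2p$), so $K\subseteq F$.

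With $K\subseteq F$ the rest is immediate. Full $2$-torsion gives $K(E[2])\subseteq F$, and since the $2$-torsion field is invariant under quadratic twist and every $\OO$-CM curve here has $j\in\Q$, we have $[K(E[2]):K]=T(\OO,2,2)=2-\left(\frac{\Delta}{2}\right)$. For $\Delta\in\{-8,-12,-16,-28\}$ the prime $2$ is ramified in $\OO$, so $\left(\frac{\Delta}{2}\right)=0$ and $[K(E[2]):K]=2$, forcing $2\mid [F:K]=p$ --- impossible. This eliminates the four ramified discriminants. For $\Delta=-7$ the prime $2$ splits, so $T(\OO,2,2)=1$ and $T(\OO,2,2q)=p$ by Theorem~\ref{bigmnthm}; part~(1) of that theorem then produces $F\supseteq K$ with $[F:K]=p$, hence $[F:\Q]=2p$, carrying an $\OO$-CM curve with $\Z/2\Z\times\Z/2q\Z\hookrightarrow E(F)$, and the exponent bound from \S3 together with the bound $M\leq 2$ upgrades this embedding to an equality realizing a new subgroup. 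The main obstacle is the middle paragraph --- establishing $K\subseteq F$ --- and in particular the reduction to eigenline points via the full $q$-torsion bound $T(\OO,q,q)=2p^2$, after which the splitting congruence $q\equiv 3\pmod 4$ does the decisive work.
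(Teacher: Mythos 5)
Your proof is correct, and in the decisive elimination step it takes a genuinely different route from the paper. The paper disposes of $\Delta\in\{-8,-12,-16,-28\}$ in one stroke by comparing $T(\OO,2,2(2p+1))=2p$ with $T^{\circ}(\OO,2,2(2p+1))=4p$ (the doubling coming from the splitness of $2p+1$), and for $\Delta=-7$ it imports the dichotomy $[K(\mathfrak h(P)):K]\in\{p,2p^2\}$ wholesale from Lemma 7.6 and Theorem 7.8 of \cite{BC1}. You instead re-derive that dichotomy from scratch for all five discriminants: the bound $T(\OO,q,q)=2p^2>2p$ forces $P$ onto a CM eigenline, the subgroup computation in the cyclic group $\F_q^\times$ of order $2p$ (so $|\im\chi|\in\{1,2,p,2p\}$, and the quotient by $\{\pm1\}$ is $1$ or $p$) pins down $[K(x(P)):K]=p$, and the mismatch with $[FK:K]=2p$ in the non-containment case yields $K\subseteq F$; the four $2$-ramified discriminants then die because $[K(E[2]):K]=2\nmid p=[F:K]$. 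This buys self-containedness at the cost of redoing work the paper cites; the paper's version is shorter but leans on external results and on the $M=2$ extension of Theorem \ref{LeastDegN}. The existence half for $\Delta=-7$ is essentially identical in both ($T(\OO,2,2q)=p$ together with part (1) of Theorem \ref{bigmnthm}). One remark: the paper's final paragraph proves the stronger fact that a $\Delta=-7$ curve in degree $2p$ with a point of order $2(2p+1)$ \emph{automatically} has full $2$-torsion, which is what Theorem \ref{chaos}(5) needs downstream; your $K\subseteq F$ argument uses only the point of order $2p+1$ and hence already supplies the hypothesis needed to run that step via Theorem 4.2 of \cite{BCS}, though you do not draw that conclusion explicitly.
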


\begin{proof}
Suppose $E/F$ is an $\OO$-CM elliptic curve with $E(F)_{\tors} \cong \Z/2 \Z \times \Z/2(2p+1) \Z$. Then $\Delta(\OO) \in \{-7,-8, -12, -16, -28\}$ and $\left( \frac{\Delta}{2p+1} \right) = 1$ by Lemma \ref{Lem3.5}.
We consider two cases. First, suppose $2$ is ramified in $\OO$. Then $T(\OO,2,2(2p+1))=2p$ by Theorem \ref{bigmnthm}, yet by Theorem \ref{LeastDegN}, $T^{\circ}(\OO,2,2(2p+1))=2 \cdot 2p$ since $2p+1$ is split. So we must have 2 split in $\OO$, which occurs if and only if $\Delta=-7$. Then $T(\OO,2,2(2p+1))=p$, and $T^{\circ}(\OO,2,2(2p+1))=2p$, as desired.

Finally, we must show that if $\Delta(\OO)=-7$ and $\Z/2(2p+1) \Z \hookrightarrow E(F)_{\tors}$, then in fact $E$ has full 2-torsion over $F$. By Lemma 7.6 and Theorem 7.8 in \cite{BC1}, if $P \in E(F)$ has order $2p+1$ and $K=\Q(\sqrt{-7})$, then $[K(\mathfrak{h}(P)):K]=p$ or $2p^2$. Since $P$ is defined over a number field of degree $2p$, it must be that $[K(\mathfrak{h}(P)):K]=p$. By Theorem  \ref{LeastDegN}, $[\Q(\mathfrak{h}(P):\Q]=2p$, and so $K \subseteq \Q(\mathfrak{h}(P)) \subseteq F$. Thus $E$ has full 2-torsion over $F$ by Theorem 4.2 of \cite{BCS}, as we recall that 2-torsion is model-independent.
\end{proof}

\begin{lem}
$\Z/2 \Z \times \Z/2(4p+1) \Z$ where $4p+1$ is prime and $p>5$ is prime does not occur as a new torsion subgroup of a CM elliptic curve in degree $2p$.
\end{lem}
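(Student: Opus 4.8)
The plan is to reduce to the unique relevant case and then show that the full structure cannot be realized without paying an extra factor of $2$ in the degree, exactly as in the ramified-$2$ subcase of the preceding lemma. By Lemma~\ref{Lem3.6}, if $E(F)_{\tors} \cong \Z/2\Z \times \Z/2(4p+1)\Z$ with $[F:\Q]=2p$, then $\Delta(\OO)=-4$ and $q := 4p+1$ is prime; moreover $\left(\frac{-4}{q}\right)=1$, so $q$ splits in $\OO=\Z[i]$. Since $h(\OO)=1$ we have $\Q(j(E))=\Q$, $[F:\Q(j(E))]=2p$, and $K=\Q(i)$. I would argue by contradiction, assuming such an $E/F$ exists.

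First I would compute $T(\OO,2,2q)$ from Theorem~\ref{bigmnthm}. Because $2$ is ramified in $K=\Q(i)$ we have $\widetilde T(\OO,2,2)=2-\left(\frac{-4}{2}\right)=2$, and because $q$ is split we have $\widetilde T(\OO,1,q)=q-1=4p$; as $N=2q\geq 4$ and $\omega=4$, this gives
\[
T(\OO,2,2q)=\frac{\widetilde T(\OO,2,2)\cdot \widetilde T(\OO,1,q)}{\omega}=\frac{2\cdot 4p}{4}=2p.
\]
Next I would use the order-$q$ point $P\in E(F)$ to pin down $F$. Since $q$ is split, none of the conditions of Theorem~\ref{LeastDegN} hold, so $T^{\circ}(\OO,q)=2\,T(\OO,q)=2p$; as this is the least degree over $\Q(j(E))$ admitting a point of order $q$ and $[F:\Q]=2p=T^{\circ}(\OO,q)\cdot h(\OO)$, we get $F=\Q(j(E),\mathfrak{h}(P))=\Q(\mathfrak{h}(P))$ and, just as in the $\Delta=-7$ lemma, $K\subseteq F$ with $[F:K]=p$. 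With $K=K(j(E))\subseteq F$ and $[F:K(j(E))]=[F:K]=p$, the embedding $\Z/2\Z\times\Z/2q\Z\hookrightarrow E(F)$ forces $T(\OO,2,2q)\mid [F:K(j(E))]$ by Theorem~\ref{bigmnthm}, i.e.\ $2p\mid p$, which is absurd.

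The main obstacle is the step establishing $K\subseteq F$ (equivalently, that the split prime $q$ doubles $T$ to give $T^{\circ}(\OO,2,2q)=4p>2p$): this requires the Weber-function degree computations of Lemma~7.6 and Theorem~7.8 of \cite{BC1} to rule out the larger possible values of $[K(\mathfrak{h}(P)):K]$, leaving only $[K(\mathfrak{h}(P)):K]=p$, so that $K\subseteq\Q(\mathfrak{h}(P))=F$. I would also emphasize the contrast with $\Delta=-7$, where $2$ is \emph{split} in $K$ so that $\widetilde T(\OO,2,2)=1$ and $T(\OO,2,2q)=p$ divides $[F:K]=p$ (leaving no contradiction, as expected since $\Z/2\Z\times\Z/2(2p+1)\Z$ does occur there); it is precisely the ramification of $2$ in $\Q(i)$, contributing the factor $\widetilde T(\OO,2,2)=2$, that produces the extra factor of $2$ and rules out $\Delta=-4$.
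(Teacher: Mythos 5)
Your proof is correct, and its first half coincides with the paper's: both compute $T(\OO,2,2(4p+1)) = \widetilde{T}(\OO,2,2)\cdot\widetilde{T}(\OO,1,4p+1)/\omega = 2\cdot 4p/4 = 2p$ from Theorem \ref{bigmnthm}. The difference lies in how the contradiction is then extracted. The paper finishes in one line by citing Theorem \ref{LeastDegN} to get $T^{\circ}(\OO,2,2(4p+1)) = 2\cdot 2p$, which already exceeds $[F:\Q(j(E))]=2p$; note that this applies Theorem \ref{LeastDegN} (stated in the text only for $M=1$) to the pair $(2,2(4p+1))$, implicitly leaning on the general-$M$ results of \cite{BC2}. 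You instead prove the underlying doubling by hand: from $[F:\Q]=T^{\circ}(\OO,4p+1)\cdot h(\OO)$ you get $F=\Q(\mathfrak{h}(P))$, the Weber-function degree computations of \cite{BC1} force $[K(\mathfrak{h}(P)):K]=p$ (the other possible value for a split prime with $\Delta=-4$ is $(4p)^2/4=4p^2$, not the $2p^2$ of the $\Delta=-7$ case, but either way it cannot divide $2p$), hence $K\subseteq F$, and then Theorem \ref{bigmnthm}(1) applied over $K(j(E))=K$ gives $2p=T(\OO,2,2(4p+1))\mid [F:K]=p$, which is absurd. This is exactly the technique the paper deploys in the preceding lemma (the $\Delta=-7$ case), transplanted here; it buys you independence from the $M\geq 2$ form of Theorem \ref{LeastDegN}, at the cost of importing the list of possible values of $[K(\mathfrak{h}(P)):K]$ from Lemma 7.6 and Theorem 7.8 of \cite{BC1}, which you assert by analogy rather than verify. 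Your closing observation correctly isolates the mechanism separating $-4$ from $-7$: the factor $\widetilde{T}(\OO,2,2)=2$ coming from the ramification of $2$ in $\Q(i)$, versus $\widetilde{T}(\OO,2,2)=1$ when $2$ splits.
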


\begin{proof}
Suppose $F$ is a number field of degree $2p$ for $p>5$ prime, and suppose $E/F$ is an $\OO$-CM elliptic curve with a point of order $4p+1$, where $4p+1$ is prime. Then by the lemmas of $\S3.2$, $\Delta(\OO)=-4$. Since $4p+1$ is split in $\OO$, Theorem \ref{bigmnthm} implies $T(\OO,2,2(2p+1))=2p$. However, by Theorem \ref{LeastDegN}, $T^{\circ}(\OO,2,2(4p+1))=2 \cdot 2p$, and we have a contradiction.
\end{proof}

\bibliographystyle{amsplain}
\bibliography{bibliography1}
\end{document}